\documentclass[12pt]{article}

      \usepackage{latexsym}
         \usepackage[reqno, namelimits, sumlimits]{amsmath}
         \usepackage{amssymb, amsfonts}
         \usepackage{amsthm}

\newtheorem{Theorem}{Theorem}[section]
\newtheorem{Lemma}{Lemma}[section]
\newtheorem{Corollary}{Corollary}[section]
\newtheorem{Proposition}{Proposition}[section]




\newcommand{\be}{\begin{equation}}
\newcommand{\ee}{\end{equation}}

\newcommand{\rth}{R^3}

\renewcommand{\div}{\operatorname{div}}

\newcommand{\pd}[1]{{\frac{\partial}{\partial x_{#1}}}}

\newcommand{\irth}{\int_{R^3}}
\newcommand{\izt}{\int_0^t}

\newcommand{\lthx}[2]{L_t^{#1}\dot H_x^{#2}}
\newcommand{\hhd}{\dot H^{1/2}}
\newcommand{\hds}[1]{\dot H^{#1}}
\newcommand{\ltx}[2]{L_t^{#1} L_x^{#2}}

\newcommand{\ve}{\varepsilon}

\renewcommand{\ll}{\lambda}

\newcommand{\qzr}{Q_{{z_0},r}}
\newcommand{\bxr}{B_{{x_0},r}}
\newcommand{\ener}{\ltx \infty 2 \cap \lthx 2 1}
\newcommand{\tmax}{T_{\rm max}}
\newcommand{\tmaxu}{\tmax(u_0)}
\newcommand{\tqr}{\tilde Q_{x_0,r}}


\newcommand{\BB}{\mathcal B}
\newcommand{\uo}{u_0}
\newcommand{\rmax}{\rho_{\rm max}}
\newcommand{\OO}{\mathcal O}
\newcommand{\EE}{\mathcal E}
\newcommand{\MM}{\mathcal M}

\newcommand{\dint}{\int\!\!\!\int}

\newcommand{\R}{{\mathbb R}}

\title{
Minimal initial data for potential Navier-Stokes singularities
 }

 \author{W. Rusin\footnote{University of Minnesota. Supported in part by the Graduate School Dissertation Fellowship}\and
   V. \v Sver\'ak\footnote{University of Minnesota. Supported in part by NSF
  Grant DMS-0800908}}

\date{ }
\begin{document}

\numberwithin{equation}{section}

\maketitle

\begin{abstract} Assuming some  initial data $u_0\in\dot H^{1/2}(R^3)$ lead to a singularity  for the 
3d Navier-Stokes equations, we show that there are also initial data with the minimal 
$\dot H^{1/2}$ -- norm which will produce a singularity.
\end{abstract}

\section{Introduction}\label{sect1}

We consider the Cauchy problem for the Navier-Stokes equations in $\rth\times(0,\infty)$
\begin{eqnarray}
\left.
\begin{array}{rcl}
u_t+ u\nabla u + \nabla p -\Delta u & = & 0 \label{nse}\\
\div u & = & 0
\end{array}
\right\}&  &
\mbox{in $R^3\times(0,\infty)$} \\
\begin{array}{ccl}
u(\,\cdot\,,0) & = & u_0\,\,\,\,\,\,\,\,\,\,\,\,
\end{array} & & \, \mbox{in $R^3$} \label{inc}
\end{eqnarray}

In this paper  we will be interested in the case when the initial condition $u_0$ belongs to
the space $\hhd(\rth)$. The $\hhd$-norm is invariant under the natural scaling of the initial
data $u_0(x) \to \ll u_0(\ll x)$, and the Cauchy problem is known to be globally well-posed
for sufficiently small $u_0\in\hhd$, and locally well-posed for any $u_0\in\hhd$,
as proved by Fujita and Kato \cite{KatoH}.
These statements have to be made more precise
by specifying the exact notion of the solution. The solutions constructed by Kato
are usually called the mild solutions. See Section~\ref{sec:mildsolutions} for details.
For $\uo\in\hhd$ we denote by $T_{\rm max}(\uo)$ the maximal
time of existence of the mild solution starting at $\uo$.
Let $\BB_\rho=\{u_0\in\hhd, ||u_0||_{\hhd}<\rho\}$, and let $\rmax$ be the supremum of all $\rho>0$
such that the Cauchy problem~\ref{nse}--~\ref{inc} is globally well-posed for $u_0\in\BB_\rho$.

It is not known if $\rmax$ is finite or infinite. Here we will be interested in the hypothetical
situation when $\rmax$ is finite. In principle $\rmax$ could be finite for various reasons, which
depend on the exact notion of the solution.
However, one can show that with the natural definition of the mild solution,
the only reason $\rmax$ could be finite is the appearance of finite-time singularities
in the solution $u$ for some initial data $\uo$.\footnote{The proof of the statement uses some special properties
of the system~\ref{nse}--~\ref{inc}, and can fail for other equations with similar non-linearities covered
by the same perturbation theory, such as the complex viscous Burgers equation.
In particular, the energy inequality plays an important role in the proof.}
 We will consider the following question, motivated
by a discussion of one of the authors with Isabelle Gallagher:

\medskip
\noindent
(Q) {\sl If $\rmax$ is finite, does there exist an initial datum $u_0\in\hhd$ with
$||u_0||_{\hhd}=\rmax$, such that the solution $u$ of the Cauchy problem~\ref{nse}--~\ref{inc}
develops a singularity in finite time?}

\medskip

We will show that the answer to the question is affirmative, see Corollary~\ref{Mcompactness}.

The initial data $u_0$ with $||\uo||_{\hhd}=\rmax$ leading to a singularity
will be called {\it $\hhd$-minimal singularity-generating data}.
We will show that, if singularities exist, the set of the $\hhd$-minimal
singularity-generating data is in fact a (nonempty) subset of $\hhd$ which
is compact, modulo the action of the scalings $u_0(x)\to \ll \uo(\ll x)$
and translations $\uo(x)\to \uo(x-x_0)$.

Corollary~\ref{Mcompactness} is an easy consequence of Theorem~\ref{main}, and Lemma~\ref{stability},
 which are in some
sense the main new observation of this paper. Theorem~\ref{main} says, roughly speaking, that the
solutions of the Cauchy problem are stable with respect to the weak convergence
of the initial data.\footnote{There are several definitions of solutions and
therefore one has to formulate the result with some care - see Section~\ref{sec:leray}
for details.} This question was studied by I. Gallagher in \cite{Gallagher} and
Theorem~\ref{main} can be thought of as a continuation of those studies.

Lemma~\ref{stability} says, roughly speaking, that singularities are stable under
weak convergence of the solution in the energy norm.

Our results can also be used to show that the absence of singularities in all
(reasonable) solutions is equivalent to certain a-priori estimates.
Such statements were already proved in \cite{Gallagher} and \cite{Tao},
and we give another illustration of this principle.

Throughout this paper our main space for the initial data is the space $\hhd$,
which is the unique $\dot H^s$ space invariant under the natural scaling
of the equation. It is natural to ask if our results are true for other
scale-invariant spaces, such as $L^3$, the Morrey space $M$ with the norm
with the norm $||u||_M^2=\sup_{x,r} r^{-1}\int_{B_{x,r}}|u|^2$ studied
in~\cite{Taylor}, or some other suitable spaces covered by~\cite{KochTataru}.
We plan to address these questions in the future.

In the case of critical dispersive equation, the notion of minimal blow-up
solutions (with a definition quite different from ours) and related
profile decomposition has played an important role in the recent remarkable
advances, see for example \cite{Bourgain, CKSTT, KenigMerle, BahouriGerard}.
These techniques have been recently also applied to the Navier-Stokes
regularity in critical spaces, see \cite{KenigKoch}.

The situation considered here is different, in that we focus only on the initial
data, since we do not have bounds in critical norms for  general solutions.

\noindent
\section{Suitable weak solutions}\label{preliminaries}
We first define {\it suitable weak solutions} of the Navier-Stokes equations, as introduced
by \cite{CKN}. See also \cite{Lin} and \cite{Scheffer}. This is a local
notion. Let $\OO$ be an open subset of the space-time $\rth\times R$ and let
$u=u(x,t)=(u_1(x,t),u_2(x,t),u_3(x,t)), \,\,p=p(x,t)$ be functions in $\OO$
such that
\begin{itemize}

\item $u$ belongs locally to the {\it energy space} $\ltx \infty 2 \cap \lthx 2 1$\,,

\item $p$ belongs locally to the space $\ltx {3/2} {3/2}$\,,

\item the equations $\div u = 0 $ and $u_t+\div(u\otimes u) + \nabla p - \Delta u = 0$ are
     satisfied on $\OO$ in the sense
      of distributions\,, and

\item the local energy inequality
      \be
      \label{localenergy}
      2\dint |\nabla u|^2\phi\,\,dx\,dt \le \dint\left[ |u|^2(\phi_t+\Delta u) + (|u|^2+2p)u\nabla\phi\right]\,dx\,dt
       \ee
       is satisfied for every non-negative smooth  test function $\phi=\phi(x,t)$
       compactly supported in $\OO$.

\end{itemize}

In what follows we will use standard notation for euclidean balls centered at $x_0\in R^n$   and parabolic balls
$Q_{z_0,r}$ centered at $z_0=(x_0,t_0)\in R^n\times R$:
\begin{eqnarray*}
B_{x_0,r} & = & \{x\in R^n;\,\,|x-x_0| < r\} \\
Q_{z_0,r} & = & B_{x_0,r}\times (t_0-r^2,t_0]
\end{eqnarray*}

Given a suitable weak solution $(u,p)\,$, a point $z_0=(x_0,t_0)\in\OO$ is called a {\it regular point}
of $(u,p)$ if $u$ is H\"older continuous in a neighborhood of $z_0$.
A {\it singular point} $z_0\in\OO$ of $(u,p)$ is any point which is not regular.
We will use the following
two propositions, the various versions of which can be found in \cite{CKN, Lin, Scheffer, LadSer}.
The version below contains some quantitative estimates which are often
not explicitly stated in the literature, although they are implicit in the
proofs. A sketch of the proof of the spatial derivatives estimates
can be found for example in \cite{NRS}.
\begin{Proposition}{\bf ($\ve$-regularity criterion)}
\label{epsiloncriterion}
There exists $\ve_0>0$ such that the following statement is true:

\noindent
If $(u,p)$ is a suitable weak solution in $\OO$, such that
\be
\label{smallness}
{1 \over r^2} \dint_{Q_{z_0,r}}(|u|^3+|p|^{3/2})\,dx\,dt < \ve_0\,\,,
\ee
for some $Q_{z_0,r}$ compactly contained in $\OO$, then all points
in $Q_{z_0,r/2}$ are regular points of $(u,p)$.
Moreover, in $Q_{z_0,r/2}$ one has
\begin{eqnarray}
\label{derivativeestimates}
|\nabla^k u| & \le &  C_k r^{-1-k}\qquad k=0,1,\dots\qquad \mbox{and} \\
|u(x,t)-u(x,t')| & \le &  C'|t-t'|^{1/3}
\end{eqnarray}
\end{Proposition}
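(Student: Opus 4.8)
The plan is to reduce to a single fixed scale via the parabolic scaling symmetry and then run a decay/iteration argument on scale-invariant energy quantities, in the spirit of Caffarelli--Kohn--Nirenberg. Since the map $u(x,t)\mapsto\ll\,u(\ll x,\ll^2t)$, $p(x,t)\mapsto\ll^2p(\ll x,\ll^2t)$ sends suitable weak solutions to suitable weak solutions and leaves the left-hand side of~\ref{smallness} invariant, I may assume $z_0=0$ and $r=1$. I then introduce the standard dimensionless quantities on the parabolic balls $Q_{0,\rho}$, $0<\rho\le1$,
\begin{equation*}
A(\rho)=\sup_{t}\frac{1}{\rho}\int_{B_{0,\rho}}|u|^2\,dx,\qquad
\delta(\rho)=\frac{1}{\rho}\dint_{Q_{0,\rho}}|\nabla u|^2\,dx\,dt,
\end{equation*}
\begin{equation*}
C(\rho)=\frac{1}{\rho^2}\dint_{Q_{0,\rho}}|u|^3\,dx\,dt,\qquad
D(\rho)=\frac{1}{\rho^2}\dint_{Q_{0,\rho}}|p|^{3/2}\,dx\,dt,
\end{equation*}
each invariant under the scaling above, so that the hypothesis reads $C(1)+D(1)<\ve_0$.

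The heart of the matter is a closed system of recursive inequalities relating these quantities across dyadic scales. Inserting a suitable cutoff into the local energy inequality~\ref{localenergy} yields a Caccioppoli-type bound controlling $A(\rho/2)+\delta(\rho/2)$ by $C(\rho)$, $D(\rho)$, and lower-order terms. The Gagliardo--Nirenberg--Sobolev inequality then interpolates $C(\t\rho)$ against $A$ and $\delta$, producing a gain of a positive power of the scale ratio $\t$. The pressure is governed by $-\Delta p=\partial_i\partial_j(u_iu_j)$; I split $p$ on $B_{0,\rho}$ into a Calder\'on--Zygmund part, estimated in $L^{3/2}$ in terms of $C$, and a harmonic remainder whose interior mean-value bounds contribute another favorable factor of $\t$. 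Combining these produces an iteration inequality of the schematic form
\begin{equation*}
\Phi(\t\rho)\le K\,\t^{\alpha}\,\Phi(\rho)+K\,\t^{-\beta}\,\Phi(\rho)^{3/2},\qquad \Phi=C+D,
\end{equation*}
with $\alpha,\beta>0$. Fixing $\t$ small to dominate the linear term, then taking $\ve_0$ small so that $\Phi(1)$ is small, forces $\Phi(\t^k)\to0$ geometrically, uniformly as the center varies over $Q_{0,1/2}$. This Morrey-type decay of $\frac{1}{\rho^2}\dint_{Q_{z,\rho}}(|u|^3+|p|^{3/2})$ at every $z\in Q_{0,1/2}$ is, by the parabolic Campanato embedding, precisely the H\"older continuity of $u$ there, so every point of $Q_{0,1/2}$ is regular.

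With $u$ bounded and small on $Q_{0,1/2}$, I bootstrap in the interior. Writing the equation as a forced heat equation $u_t-\Delta u=-\div(u\otimes u)-\nabla p$ and using the heat and Oseen kernel representations together with interior parabolic (Schauder-type) estimates, each iteration trades integrability for one more spatial derivative; scale invariance pins the constants so that $|\nabla^ku|\le C_k$ on $Q_{0,1/4}$, which unscales to $|\nabla^ku|\le C_kr^{-1-k}$. For the time modulus, the equation gives $\partial_tu=\Delta u-u\nabla u-\nabla p$: the first two terms are now bounded, while $\nabla p$ is only $L^{3/2}$ in time, so $\partial_tu\in L^{3/2}_t$ locally, and H\"older's inequality yields $|u(x,t)-u(x,t')|\le\int_{t'}^t|\partial_tu|\,ds\le C'|t-t'|^{1/3}$, the exponent $1/3=1-\tfrac23$ being dictated exactly by the $L^{3/2}$ pressure regularity.

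The main obstacle is the nonlocal pressure: closing the iteration requires that the harmonic far-field part of $p$ furnish a genuine gain in the small parameter $\t$, balanced against the Calder\'on--Zygmund part without exhausting the smallness budget. Making the powers of $\t$ line up so that the linear term in $\Phi$ can be absorbed, and checking that the cutoff errors in the energy inequality stay subcritical, is the delicate bookkeeping on which everything hinges; once that single decay inequality is in hand, both the regularity and the quantitative derivative bounds follow by routine iteration and the time estimate by the elementary argument above.
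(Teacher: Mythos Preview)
The paper does not give its own proof of this proposition: it is stated as a known result, with references to \cite{CKN, Lin, Scheffer, LadSer} for the regularity criterion and to \cite{NRS} for the quantitative spatial-derivative estimates. There is therefore no ``paper's proof'' to compare against beyond those citations.

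Your sketch is the standard Caffarelli--Kohn--Nirenberg/Lin iteration and is correct in outline; it is precisely the argument the paper is invoking by reference. The decomposition of the pressure into a Calder\'on--Zygmund piece and a harmonic remainder, the resulting decay iteration for $C+D$, the Campanato-to-H\"older step, and the bootstrap for spatial derivatives are all as in those sources. Your argument for the time H\"older exponent is also right and matches the paper's remark following the proposition: once $u$ is spatially smooth, the only obstruction to time regularity is the harmonic part of the pressure, whose gradient is controlled in $L^{3/2}_t L^\infty_x$ locally (via interior harmonic estimates from the $L^{3/2}_{t,x}$ assumption on $p$), and $1-\tfrac{2}{3}=\tfrac{1}{3}$ is sharp for exactly the reason you give.
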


\noindent
Remark: The regularity in $t$ cannot be improved, due to solutions
of the form $u(x,t)=\nabla h(x,t)$ with $h$ harmonic in $x$ and having arbitrary
dependence on $t$. The H\"older exponent in $t$ for these solutions
is dictated by the assumptions on the integrability of the pressure
$p=-|\nabla h|^2/2 - h_t$, and  under the assumptions of the lemma
the H\"older exponent $1/3$ is optimal.

\begin{Proposition}{\bf (Compactness)}
\label{compactness}
Let $(u^k,p^k)\,\,, k=1,2,\dots$ be a sequence of suitable weak solutions such that
$u^k$ are uniformly bounded in the energy space $\ener$
on compact subsets of $\OO$ and $p^k$ are  uniformly bounded in $\ltx {3/2} {3/2}$
on compact subsets of $\OO$. Then the sequence $u^k$ is compact in $\ltx 3 3$
on compact subsets of $\OO$. Moreover, if $u^k\to u$ in $\ltx 3 3$ on compact subsets
of $\OO$ and $p^k\rightharpoonup p$ in $\ltx {3/2} {3/2}$ on compact
subsets of $\OO$, then $(u,p)$ is again a suitable weak solution.
\end{Proposition}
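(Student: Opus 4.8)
The plan is to prove the two assertions in turn: first the strong $\ltx 3 3$-compactness by an Aubin--Lions--Simon argument, and then the stability of the notion of suitable weak solution under this convergence. Throughout I would fix a parabolic cylinder $Q=B\times I$ with $\bar Q\subset\OO$ and argue on $Q$, recovering the statement on general compact subsets at the end by a diagonal/exhaustion argument. The spatial regularity needed for Aubin--Lions is given by hypothesis, since $u^k$ is bounded in $L^2(I;H^1(B))$. To control oscillations in time I would read a bound on $\partial_t u^k$ off the equation $\partial_t u^k=\Delta u^k-\div(u^k\otimes u^k)-\nabla p^k$, which holds on $Q$ in the sense of distributions. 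Each term sits in a negative-order space uniformly in $k$: $\Delta u^k$ is bounded in $L^2(I;H^{-1}(B))$; by the Ladyzhenskaya interpolation $\ltx\infty 2\cap\lthx 2 1\hookrightarrow\ltx{10/3}{10/3}$ the tensor $u^k\otimes u^k$ is bounded in $L^{5/3}(Q)$, so $\div(u^k\otimes u^k)$ is bounded in $L^{5/3}(I;W^{-1,5/3}(B))$; and $\nabla p^k$ is bounded in $L^{3/2}(I;W^{-1,3/2}(B))$ by the pressure hypothesis. On the bounded interval $I$ all three lie in $L^{3/2}(I;H^{-2}(B))$, so $\partial_t u^k$ is bounded there.

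Since $H^1(B)\hookrightarrow\hookrightarrow L^3(B)\hookrightarrow H^{-2}(B)$ (the first embedding being compact because $3<6$ in three dimensions), the Aubin--Lions--Simon lemma produces a subsequence converging strongly in $L^2(I;L^3(B))$. To upgrade this to $\ltx 3 3$ I would interpolate against the uniform $\ltx{10/3}{10/3}$ bound: viewing $3$ as an interpolation exponent between $2$ and $10/3$ in the time variable (with values in the fixed space $L^3(B)$) yields convergence in $L^q(I;L^3(B))$ for every $2\le q<10/3$, in particular for $q=3$. A diagonal argument over an exhaustion of $\OO$ by such cylinders then gives a single subsequence converging in $\ltx 3 3$ on every compact subset, which is the first claim.

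For the second assertion, assume $u^k\to u$ in $\ltx 3 3$ and $p^k\rightharpoonup p$ in $\ltx{3/2}{3/2}$, locally. Membership of $u$ in the energy space and of $p$ in $\ltx{3/2}{3/2}$ follows from weak(-$*$) lower semicontinuity of the norms, and $\div u=0$ passes to the limit trivially. Passing to the limit in the momentum equation is routine: the linear terms converge weakly, while $u^k\otimes u^k\to u\otimes u$ in $L^{3/2}$ since $u^k\to u$ in $L^3$; hence $(u,p)$ solves the system in the sense of distributions. It then remains to pass to the limit in the local energy inequality \ref{localenergy} for a fixed test function $\phi\ge 0$. On the right-hand side $|u^k|^2\to|u|^2$ in $L^{3/2}$ and $|u^k|^2u^k\to|u|^2u$ in $L^1$ (both by Hölder, using only $u^k\to u$ in $L^3$), so the first two terms converge; the pressure term converges by a weak--strong argument, since $p^k\rightharpoonup p$ in $L^{3/2}$ while $u^k\cdot\nabla\phi\to u\cdot\nabla\phi$ strongly in $L^3$ (the factor $\nabla\phi$ being smooth and compactly supported). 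On the left-hand side $\nabla u^k\rightharpoonup\nabla u$ in $L^2_{\rm loc}$, and since $\phi\ge 0$ the functional $v\mapsto\dint|v|^2\phi$ is weakly lower semicontinuous, so $\dint|\nabla u|^2\phi\le\liminf_k\dint|\nabla u^k|^2\phi$. Combining the lower bound on the left with the genuine limit on the right preserves the inequality.

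I expect the main obstacle to be the passage to the limit in the local energy inequality, and within it the pressure term specifically: this is the one place where a merely weak limit must be multiplied by a second factor, so the argument genuinely requires the strong $\ltx 3 3$-convergence of $u^k$ rather than weak convergence, which is precisely the point of the first part. The lower semicontinuity of the dissipation term is what forces an inequality, rather than an identity, to survive in the limit, and one must check that the $\liminf$ on the left is compatible with the honest limit on the right so that the inequality is not lost.
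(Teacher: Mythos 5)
The paper does not actually prove Proposition~\ref{compactness}; it states it and refers to \cite{CKN}, \cite{Lin}, \cite{Scheffer}, \cite{LadSer} for the various versions, so there is no in-paper proof to compare against. Your argument --- reading a uniform bound on $\partial_t u^k$ in $L^{3/2}(I;H^{-2}(B))$ off the equation (with the pressure contributing via $\nabla p^k\in L^{3/2}(I;W^{-1,3/2}(B))$ and the nonlinearity via the Ladyzhenskaya $L^{10/3}_{t,x}$ bound), applying Aubin--Lions--Simon with $H^1(B)\hookrightarrow\hookrightarrow L^3(B)\hookrightarrow H^{-2}(B)$, interpolating up to $\ltx 3 3$, and then passing to the limit in the local energy inequality using strong convergence for the cubic and pressure--velocity terms and weak lower semicontinuity for the dissipation --- is exactly the standard proof that appears in those references, and it is correct, including the one genuinely delicate point you flag, namely that the pressure term is a weak limit paired with a strong one.
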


The two previous propositions imply the following lemma, which will
be important for the proof of our main results.

\begin{Lemma}{\bf (Stability of singularities)}
\label{stability}
In the situation of Proposition~\ref{compactness}, assume
that $z^k\in\OO$ are singular points of $(u^k,p^k)\,,\,k=1,2,\dots$,
and that $z^k\to z_0\in\OO$.
Then $z_0$ is a singular point of $(u,p)$.
\end{Lemma}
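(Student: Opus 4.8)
The plan is to argue by contradiction: assuming $z_0$ is a regular point of the limit $(u,p)$, I will show that $z^k$ must be a regular point of $(u^k,p^k)$ for all large $k$, contradicting the hypothesis that the $z^k$ are singular. The tool is the $\ve$-regularity criterion (Proposition~\ref{epsiloncriterion}): it suffices to produce one fixed small radius $\rho$ such that for all large $k$
\[
\frac{1}{\rho^2}\dint_{Q_{z^k,\rho}}\bigl(|u^k|^3+|p^k|^{3/2}\bigr)\,dx\,dt<\ve_0 .
\]
Since $z^k\to z_0$ and $\OO$ is open, $Q_{z^k,\rho}$ is compactly contained in $\OO$ and contained in $Q_{z_0,2\rho}$ for all large $k$, so it is enough to make the same scaled quantity over the fixed set $Q_{z_0,2\rho}$ small for large $k$. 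Because $z^k\in Q_{z^k,\rho/2}$, the $\ve$-regularity criterion then forces $z^k$ to be regular.

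The velocity contribution is straightforward. As $z_0$ is regular, $u$ is H\"older continuous, hence bounded, on a parabolic neighbourhood of $z_0$, so $\tfrac{1}{\rho^2}\dint_{Q_{z_0,2\rho}}|u|^3\lesssim\rho^3\to0$ as $\rho\to0$. Since $u^k\to u$ strongly in $\ltx{3}{3}$ on compact subsets of $\OO$ (Proposition~\ref{compactness}), for each fixed $\rho$ the integral $\dint_{Q_{z_0,2\rho}}|u^k|^3$ converges to $\dint_{Q_{z_0,2\rho}}|u|^3$, so this term is below $\ve_0/3$ for large $k$ once $\rho$ is small.

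The pressure is the main obstacle, since Proposition~\ref{compactness} provides only weak convergence $p^k\rightharpoonup p$, and weak convergence yields lower semicontinuity of the $L^{3/2}$ norm, which is the wrong direction for transferring smallness. To get around this I will use the local pressure relation $-\Delta p^k=\partial_i\partial_j(u^k_iu^k_j)$, valid on $\OO$, and split the pressure. Fixing a cutoff $\chi$ equal to $1$ on $B_{x_0,r_0}$ and supported in $B_{x_0,2r_0}$ (with $Q_{z_0,2r_0}$ compactly in $\OO$), I write $p^k=\tilde p^k+h^k$ with $\tilde p^k=-R_iR_j(\chi\,u^k_iu^k_j)$, where $R_i$ are the Riesz transforms. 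Then $h^k(\cdot,t)$ is harmonic in $B_{x_0,r_0}$ for a.e.\ $t$, since $-\Delta(p^k-\tilde p^k)=\partial_i\partial_j((1-\chi)u^k_iu^k_j)$ vanishes there. For the Riesz part, strong $L^3$ convergence of $u^k$ gives $\chi u^k_iu^k_j\to\chi u_iu_j$ in $\ltx{3/2}{3/2}$, and boundedness of $R_iR_j$ on $L^{3/2}$ (Calder\'on--Zygmund) yields $\tilde p^k\to\tilde p$ in $\ltx{3/2}{3/2}$; moreover, H\"older continuity of $u$ near $z_0$ (the definition of a regular point) makes $\chi u_iu_j$ a compactly supported H\"older function, so $\tilde p$ is bounded near $z_0$ by Schauder estimates for singular integrals, whence $\tfrac{1}{\rho^2}\dint_{Q_{z_0,2\rho}}|\tilde p|^{3/2}\lesssim\rho^3\to0$ and the corresponding $\tilde p^k$ integrals converge to it. For the harmonic part I use interior estimates, $\sup_{B_{x_0,r_0/2}}|h^k(\cdot,t)|\le C r_0^{-2}\|h^k(\cdot,t)\|_{L^{3/2}(B_{x_0,r_0})}$, together with the uniform bound $\|h^k\|_{L^{3/2}(Q_{z_0,r_0})}\le\|p^k\|_{L^{3/2}}+\|\tilde p^k\|_{L^{3/2}}$ from the hypotheses of Proposition~\ref{compactness} and the Riesz bound, to obtain
\[
\frac{1}{\rho^2}\dint_{Q_{z_0,2\rho}}|h^k|^{3/2}\,dx\,dt\le C\,\rho\,\sup_k\|h^k\|_{L^{3/2}(Q_{z_0,r_0})}^{3/2}\le C'\rho ,
\]
uniformly in $k$. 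The decisive feature is the favourable parabolic scaling: the spatial volume factor $\rho^3$ beats the normalizing $\rho^{-2}$, converting a merely uniform $L^{3/2}$ bound into a quantity small uniformly in $k$.

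Collecting the three contributions, I first choose $\rho$ small enough that the velocity term, the Riesz-pressure limit, and the (uniform) harmonic-pressure bound are each below $\ve_0/3$, and then take $k$ large so the first two sequence-integrals sit within their limits' small values; the total is then $<\ve_0$. Hence the $\ve$-regularity criterion applies on $Q_{z^k,\rho}$ and $z^k$ is regular for large $k$, the desired contradiction. The only genuinely delicate point is the pressure under weak convergence, which the harmonic decomposition reduces to a strongly convergent Riesz piece plus a harmonic piece whose interior estimate makes the local bound robust and uniform in $k$.
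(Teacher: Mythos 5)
Your overall strategy is exactly the paper's: argue by contradiction through the $\ve$-regularity criterion of Proposition~\ref{epsiloncriterion}, handle the velocity term by the strong $\ltx 3 3$ convergence from Proposition~\ref{compactness} plus the $O(\rho^3)$ bound near a regular point, and handle the pressure by the decomposition $p^k=\tilde p^k+h^k$ with $\tilde p^k$ a Riesz transform of the cut-off $u^k_iu^k_j$ (compact in $L^{3/2}_{t,x}$ by Calder\'on--Zygmund) and $h^k$ harmonic in $x$ and merely bounded in $L^{3/2}_{t,x}$. Your treatment of the harmonic piece is in fact a small simplification of the paper's: the mean-value bound $\sup_{B_{x_0,r_0/2}}|h^k(\cdot,t)|\le C r_0^{-2}\|h^k(\cdot,t)\|_{L^{3/2}(B_{x_0,r_0})}$, integrated in time, already produces the decisive factor $\rho$ uniformly in $k$, so you never need the paper's device of subtracting the spatial mean $(h^k(\cdot,t))_{r'}$ (i.e.\ exploiting that the pressure is only defined up to a function of $t$) together with the oscillation estimate~\ref{hdecay}.

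There is, however, one step that is wrong as written: the claim that ``H\"older continuity of $u$ near $z_0$ makes $\chi u_iu_j$ a compactly supported H\"older function, so $\tilde p$ is bounded near $z_0$ by Schauder estimates.'' The cutoff $\chi$ lives on the fixed ball $B_{x_0,2r_0}$, while the hypothesis that $z_0$ is regular gives H\"older continuity of $u$ only in a small neighborhood of $z_0$; on the rest of $\operatorname{supp}\chi$ the limit $u$ is merely in the energy space (it may even have other singular points there), so $\chi u_iu_j$ need not be H\"older and the asserted boundedness of $\tilde p$ near $z_0$ does not follow. This matters, because the smallness of $\rho^{-2}\dint_{Q_{z_0,2\rho}}|\tilde p|^{3/2}$ is precisely what your limiting argument for the Riesz part requires; mere $L^{3/2}$ integrability of $\tilde p$ is not enough against the normalization $\rho^{-2}$. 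The gap is fixable with tools you already deployed: write $\chi u_iu_j=\chi_1 u_iu_j+(\chi-\chi_1)u_iu_j$, where $\chi_1$ is a further cutoff supported inside the regularity neighborhood of $z_0$. The first piece is compactly supported and H\"older uniformly for $t$ near $t_0$, so its Riesz transform is bounded near $z_0$ and contributes $O(\rho^3)$; the second piece vanishes identically near $x_0$, hence its Riesz transform is harmonic in $x$ near $x_0$ and lies in $L^{3/2}_{t,x}$, so it is controlled by exactly the same mean-value argument you used for $h^k$, contributing $O(\rho)$ uniformly. (A cosmetic point: if $t^k>t_0$ the cylinder $Q_{z^k,\rho}$ is not contained in $Q_{z_0,2\rho}$; since regularity at $z_0$ gives bounds in a full space-time neighborhood, simply run the estimates on $B_{x_0,2\rho}\times(t_0-4\rho^2,t_0+\rho^2]$.) With these repairs your proof is complete and coincides in substance with the paper's, which is itself terse at the very same spot, saying only that $\tilde p^k$ ``can be dealt with in the same way as the term with $u^k$.''
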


\begin{proof}
If the regularity criterion in Proposition~\ref{epsiloncriterion} did not contain
the pressure $p$, the statement of the Lemma would be immediate: indeed,
if $z_0$ is a regular point of $u$, then
$r^{-2}\dint_{\qzr}|u|^3\,\,dx\,dt = O(r^3)$ as $r\to 0_+$.
Choosing a sufficiently small $r$,  one sees that
$r^{-2}\dint_{\qzr} |u^k|^3\,\,dx\,dt$ is small for large $k$
by the strong convergence of $u^k$ in $L_{t,x}^3$.
However, such argument cannot be applied to the pressure term,
since the sequence $p^k$ may not have a subsequence which is
compact in $L_{t,x}^{3/2}$.
It is well known how to deal with this difficulty: the trick
can be found in one form or another in the proofs of partial
regularity \cite{CKN, Lin, Scheffer, LadSer}.
The pressure $p^k$ solves the equation
\be
\label{pressureeq}
-\Delta p^k = \partial_i\partial_j(u^k_i u^k_j)
\ee
Recall that the term $ u^k_i u^k_j$ is compact in $L^{3/2}_{t,x}$
on compact subsets of $\OO$. Therefore we can invert the Laplacian
in~\ref{pressureeq} using a suitable boundary condition,
(or just taking the Riesz transforms $\tilde p^k=R_iR_j(u^k_i u^k_j\chi_{B_{x_0,r}}$)
and decompose $p^k$ as
\be
\label{pressuredec}
p^k=\tilde p^k + h^k
\ee
with $\tilde p^k$ compact in $L^{3/2}_{t,x}(\qzr)$
(by Calderon-Zygmund estimates)
and $h^k$ bounded in $L^{3/2}_{t,x}(\qzr)$ and harmonic
in $x$ in $\qzr$. The term with $\tilde p^k$ can be dealt with
in the same way as the term with $u^k$.
The term $h^k$ is handled by using classical estimates
for harmonic functions:
let $\gamma\ge 1$ and let $h\in L_x^{\gamma}(\bxr)$ be harmonic in
$\bxr$. We denote
$(h)_{r'}=|B_{x_0,r'}|^{-1}\int_{B_{x_0,r'}}h$.
For $r'\le r/2$ and $x'\in B_{x_0,r'}$ we have
\be
\label{hdecay}
|h(x')-(h)_{r'}|^\gamma
\le C_{\gamma}{\left({{r'}\over r}\right)^\gamma}r^{-3}\int_{B_{x_0,r}}|h|^\gamma \,dx
\ee
We recall that we can change the pressure by any function depending on $t$ only.
Therefore we can use~\ref{hdecay}  with $h=h^k$, and integrating over $Q_{z_0,r'}$, we get
the required smallness of the term $(r')^{-2} \dint_{Q_{z_0,r'}} |h^k-(h^k(\cdot,t))_{r'}|^{3/2}\,dx\,dt.$
\end{proof}

In fact, the above proof together with the estimates in Proposition~\ref{epsiloncriterion}
give the following  version of Lemma~\ref{stability}.
\begin{Lemma}
\label{stability2}
Under the assumptions of Proposition~\ref{compactness}, let $K$ be a compact subset of $\OO$.
If each point of $K$ is a regular point of $u$, then, for sufficiently large $k$, each point of
$K$ is also a regular point of $u^k$, and on the set $K$ the functions $u^k$ converge
to $u$, together with all spatial derivatives.
\end{Lemma}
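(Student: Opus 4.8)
The plan is to run the argument of Lemma~\ref{stability} quantitatively and uniformly over $K$, and then to read the convergence of derivatives directly off the estimates~\ref{derivativeestimates} of Proposition~\ref{epsiloncriterion}. The content of the lemma is that regularity, unlike singularity, transfers from $u$ to $u^k$, and the only genuine obstacle is again the pressure: regularity of $u$ makes $r^{-2}\dint_{\qzr}|u|^3$ small but says nothing directly about $r^{-2}\dint_{\qzr}|p^k|^{3/2}$, so the splitting~\ref{pressuredec} must be used.

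First I would fix the geometry. The set of regular points of $u$ is open and contains the compact set $K$, so there is a compact neighborhood $K'\subset\OO$ of $K$ on which $u$ is bounded, $|u|\le M$, and on which the $\ltx 3 3$-convergence $u^k\to u$ and the uniform energy and pressure bounds of Proposition~\ref{compactness} all hold. Fixing $z_0\in K$, I would verify the smallness hypothesis~\ref{smallness} for $(u^k,p^k)$ at a small scale using the decomposition $p^k=\tilde p^k+h^k$ of~\ref{pressuredec}, with $\tilde p^k=R_iR_j(u^k_iu^k_j\chi_{\bxr})$ and $h^k$ harmonic in $x$. The velocity term $(r')^{-2}\dint_{Q_{z_0,r'}}|u^k|^3$ is, for large $k$, close to $(r')^{-2}\dint_{Q_{z_0,r'}}|u|^3=O(M^3(r')^3)$; the term $\tilde p^k$ is bounded through Calderon-Zygmund by the same cubic integral of $u^k$ and is therefore $O(M^3r^3)$; and the harmonic remainder $h^k$, after subtracting its spatial mean $(h^k(\cdot,t))_{r'}$ (legitimate, since the pressure is determined only up to a function of $t$), is controlled by the decay estimate~\ref{hdecay} with $\g=3/2$, which produces a gain $(r'/r)^{5/2}$ multiplying the scale-invariant quantity $r^{-2}\dint_{\qzr}|h^k|^{3/2}$. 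Choosing first $r$ small (to beat the $M$-terms), then $r'=\t r$ with $\t$ small (to beat the harmonic excess), and finally $k$ large (to absorb the $\ltx 3 3$-errors) drives the left side of~\ref{smallness} below $\ve_0$.

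The new point, relative to Lemma~\ref{stability}, is that every choice can be made uniform in $z_0\in K$. The constants $M$ and $\ve_0$ are uniform, hence so are $r$ and $\t$; the bound on $r^{-2}\dint_{\qzr}|h^k|^{3/2}$ is uniform in $z_0$ and $k$ because it follows from the uniform $\ltx {3/2} {3/2}$ pressure bound and the $\tilde p^k$ estimate on the fixed compact set $K'$; and the remaining $\ltx 3 3$-errors are made uniformly small by dominating the integrals over the cylinders $Q_{z_0,r'}$ and over the $z_0$-dependent cut-offs $\chi_{\bxr}$ by the corresponding integrals over all of $K'$, which no longer depend on $z_0$. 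Thus a single $k_0$ works: for $k\ge k_0$ the hypothesis~\ref{smallness} holds at every $z_0\in K$ at the common scale $r'$, so Proposition~\ref{epsiloncriterion} makes each such $z_0$ a regular point of $u^k$ and yields the uniform bounds $|\nabla^j u^k|\le C_j(r'/2)^{-1-j}$ on $Q_{z_0,r'/2}$.

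Finally I would extract the convergence of derivatives. For $k\ge k_0$ the estimates~\ref{derivativeestimates} give bounds on $u^k$ and on all of its spatial derivatives that are uniform over a neighborhood of $K$, together with the uniform $1/3$-H\"older continuity in time; by Arzela-Ascoli the family $\{u^k\}$ and each family $\{\nabla^j u^k\}$ are precompact in $C^0(K)$. Any subsequential limit must equal $u$, since $u^k\to u$ already in $\ltx 3 3$ and hence in the sense of distributions, so the whole sequence converges, $u^k\to u$ on $K$ together with all spatial derivatives. I expect the pressure splitting, and above all the check that every constant entering it is uniform over the compact $K$, to be the only delicate step; the concluding Arzela-Ascoli argument is then routine.
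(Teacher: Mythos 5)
Your argument is exactly the one the paper intends: the paper gives no separate proof of this lemma, asserting only that it follows from the proof of Lemma~\ref{stability} combined with the quantitative estimates in Proposition~\ref{epsiloncriterion}, and your write-up carries that out in detail. The core is correct: the pressure splitting $p^k=\tilde p^k+h^k$, the $(r'/r)^{5/2}$ gain for the harmonic part, the order of choices (first $r$, then $\theta$, then $k$), and the uniformization over $z_0\in K$ by dominating all local integrals by integrals over a fixed compact neighborhood $K'$. In particular you correctly use the key scaling fact that the Calderon--Zygmund term is $O(M^3r^3)$ \emph{independently} of $\theta$, because the time integration runs only over an interval of length $(r')^2$, which cancels the factor $(r')^{-2}$; without this the order ``$r$ first, $\theta$ second'' would not close.

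The one step that is not justified as written is the final Arzel\`a--Ascoli argument. Precompactness of $\{\nabla^j u^k\}$ in $C^0(K)$ requires equicontinuity in \emph{both} space and time, but Proposition~\ref{epsiloncriterion} provides the H\"older-$1/3$ bound in time only for $u^k$ itself; the derivative estimates $|\nabla^j u^k|\le C_j$ give spatial equicontinuity of $\nabla^{j-1}u^k$ and nothing about time regularity of $\nabla^j u^k$. The repair is routine: apply your compactness argument to $u^k$ alone (uniform spatial derivative bounds plus uniform time-H\"older continuity do give equicontinuity of $u^k$), obtaining $u^k\to u$ uniformly on a neighborhood of $K$; then, at each fixed time, the interpolation inequality $\|\nabla^j f\|_{L^\infty(B)}\le C\left(\|f\|_{L^\infty(B')}^{1/(j+1)}\|f\|_{C^{j+1}(B')}^{j/(j+1)}+\|f\|_{L^\infty(B')}\right)$ applied to $f=u^k(\cdot,t)-u(\cdot,t)$, with the uniform bounds on $\nabla^{j+1}u^k$ and $\nabla^{j+1}u$ from~\ref{derivativeestimates}, upgrades uniform convergence of $u^k$ to uniform convergence of every spatial derivative on $K$. (Equivalently, the same interpolation yields uniform time-H\"older continuity of each $\nabla^j u^k$ with exponent $1/(3(j+1))$, after which your Arzel\`a--Ascoli argument goes through verbatim.) With that substitution the proof is complete.
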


\section{Mild solutions}
\label{sec:mildsolutions}
In this section we review the results we need about the so-called ``mild solutions"
of the problem~\ref{nse}--~\ref{inc}. This approach was introduced by Fujita and Kato, \cite{KatoH},
see also \cite{Kato}, although the terminology  was introduced later.
Let us first recall basic facts about the linear
Stokes problem
\begin{eqnarray}
\left.
\begin{array}{rcl}
u_t+\nabla p -\Delta u & = & \pd k f_k \label{linstokeseq}\\
\div u & = & 0
\end{array}
\right\}&  &
\mbox{in $R^n\times(0,\infty)$}\\
\begin{array}{ccl}
u(\,\cdot\,,0) & = & u_0\,\,\,\,\,\,\,\,\,\,\,\,
\end{array} & & \, \mbox{in $R^n$} \label{ic}
\end{eqnarray}

Here $f_k=(f_{1k},\dots,f_{nk})$ for $ k=1,\dots,n$.
Let $S(t)$ be the solution operator of the heat equation and let $P$ be the Helmholtz
projection of vector fields onto the divergence-free vector fields. By definition,
a mild solution of the linear problem above is given by the representation formula
\be
\label{repform1}
u(t)=S(t)u_0+\int_0^t S(t-s)P\nabla\cdot f(s)\,ds
\ee
A mild solution of the Cauchy problem~\ref{nse}--~\ref{inc} is the mild solution
of the linear problem above with $f_{ij}=-u_iu_j$.
We will denote the ``heat extension" $S(t)u_0$ of the initial datum $u_0$ by $U=U(x,t)$.
The term \hbox{$ \int_0^t S(t-s)P\nabla\cdot f(s)\,ds$} with $f_{ij}=-u_iv_j$ will be denoted
by $B(u,v)$.
In this notation, a mild solution of the Cauchy problem~\ref{nse}--~\ref{inc} in $R^3\times (0,T)$ is defined
as a solution of the integral equation
\be
\label{integraleq}
u=U+B(u,u)
\ee
in a suitably defined  space of functions $X$ on $R^3\times(0,T)$.
In this approach, a key property of $X$ is the continuity of the
bilinear form $(u,v) \to B(u,v)$ as a map from $X\times X$ to $X$.
This is equivalent to
\be
\label{Bcontinuity}
||B(u,v)||_X\le c||u||_X ||v||_X
\ee
For initial datum $u_0\in \hhd$ there are many possible choices
of $X$. A good choice is for example $X=\lthx 4 1$.
In this case the proof of~\ref{Bcontinuity} is particularly simple:
using the inequality $||fg||_{\hhd (R^3)}\le c||f||_{\hds 1 (R^3)}||g||_{\hds 1(R^3)}$
we see that for $u,v\in X$ we have $uv\in\lthx 2 {1/2}$.
Recalling the energy inequality for the linear system~\ref{linstokeseq} ,
\be
 \label{linenergy}
 ||\,|\nabla|^s u||^2_{\ltx \infty 2} +
 ||\,|\nabla|^s u||^2_{\lthx 2 1} \le
 ||\,|\nabla|^s u_0||^2_{L_x^2} +
 ||\,|\nabla|^s f|||^2_{\ltx 2 2}
\ee
one easily gets~\ref{Bcontinuity}.
Also, the energy inequality implies that $u_0\in \hhd$ gives $U\in X$, with
$||U||_X\le ||u_0||_{\hhd}$.

In fact, the above proof gives that $B$ maps $\lthx 4 1\times \lthx 4 1$ into
$\mathcal C_t\hhd_x\cap\lthx 2 {3/2}$ (where the first space denotes the space of continuous
functions of $t$ with values in $\hhd$), which shows that equation~\ref{integraleq}
can be treated as an ODE in $t$. In particular, one always has local-in-time
existence of the solution $u$, and one can define the maximal time of existence $\tmax(u_0)$
on which the solution of \ref{integraleq} exists.

If $\tmax(u_0)$ is finite, one has
\be
\label{limit}
\lim_{T\to \tmax(u_0)} ||u||_{\lthx 4 1 (R^3\times (0,T))}=\infty\,\,.
\ee

We note that for sufficiently small $||u_0||_{\hhd}$ we have $\tmax(u_0)=+\infty$.
This justifies the definition of $\rmax$ from the Introduction.

 The mild solutions have the same regularity as $U$ since, roughly speaking,
 for short times they are just a perturbation of $U$, and this can be
 iterated forward to the time interval where the solution exists.\footnote{
 One has to be cautious with this statement if ``regularity" also
 means decay properties of $u$ as $x\to\infty$. Due to the non-local
 effect of the constraint $\div u = 0$, the solutions $u$ can have slower
 decay at infinity than $U$. See for example \cite{Brandolese}.}
 In particular, the mild solutions are smooth in
 $R^3\times(0,\tmax(u_0))$.

 One obvious reason for $\tmax(u_0)$ to be finite would be the development
 of a singularity in the solution $u$ at time $\tmax(u_0)$.
 A-priori it is not clear that this is the only reason. One could also imagine
 a scenario where the $\lthx 4 1$ norm of the solution would blow up
 even though the solution would remain smooth on each compact subset.
 However, this scenario can be ruled out. The only reason for
 the blow-up of the  $\lthx 4 1$ norm of the mild solutions $u$ with
 $u_0\in\hhd$ are the possible finite-time
 singularities. This will be justified in the next section.

 \section{Leray's solutions}
 \label{sec:leray}
In his pioneering work \cite{Leray34} Leray proved the existence
of the weak solutions to the Cauchy problem \ref{nse}--~\ref{inc}.
A key ingredient in his approach is the energy inequality
\be
\label{globalenergy}
\int_{R^3} |u(x,t)|^2\,dx + 2\int_0^t\int_{R^3}|\nabla u(x,s)|^2 \,\,dx\,ds\le\int_{R^3} |u_0(x)|^2\,dx
\ee
This inequality is the only known a-priori bound for general solutions.
At the first glance it would seem that for its application it is crucial that
$u_0\in L^2(R^3)$, which would rule out using Leray's techniques in
the situation of the preceding section, where the basic assumption is $u_0\in\hhd$,
which is not a subset of $L^2$.

However, Lemari\'e-Rieusset \cite{Lemarie} found a generalization of~\ref{globalenergy}
to the situation when the energy is only (uniformly) locally finite, and this makes
it possible to extend the theory of Leray's weak solutions to a much more general setting.
See also \cite{KiSer}.
In this paper we will not need the full version of Lemari\'e-Rieusset's local
theory, but we will need a version of his inequality for local energy, see Lemma~\ref{lemarieestimates}.

In our setting with $u_0\in \hhd$ one can use the following trick by C.~Calderon \cite{CCalderon}
to construct the weak solutions in a simple way.
We can write $u_0=a_0+v_0$ with $a_0$ smooth and small in $\hhd$,
and $v_0$ in $L^2$. (For example, $a_0$ can be defined in terms of
the Fourier transform as $\hat a_0(\xi)=\hat{u}_0(\xi)\varphi(\xi)$,
where $\varphi$ is a suitable smooth function equal to $1$ in a small
neighborhood of $0$.) Since $a_0$ is small, the Cauchy problem~\ref{nse}--~\ref{inc}
has a global solution $a$ which is small in $\lthx 4 1$.
We now seek solutions $u$ in the form $u=a+v$, where $v$ is a new
unknown function satisfying the equation
\be
\label{vequation}
v_t+a\nabla v + v\nabla a + v\nabla v+\nabla q -\Delta v = 0 \,.
\ee
The energy identity for this equation is
\be
\int_{R^3}|v(x,t)|^2\,dx+2\izt\irth|\nabla v(x,s)|^2\,dx\,ds=\irth|v_0(x)|^2+2\izt\irth (a\nabla v)v\,.
\ee
The last integral on the right-hand side can be estimated by
$$c||a||_{\lthx 4 1}||v||^{1/2}_{\ltx\infty 2}||\nabla v||^{3/4}_{\ltx 2 2}$$
and we see that we have a good energy estimate for
$v$ when $||a||_{\lthx 4 1}$ is sufficiently small.

Leray's construction of weak solutions can therefore be applied to
equation~\ref{vequation} for $v$. This way we can construct a global
weak solution $u=a+v$ to the Cauchy problem with $u_0\in \hhd$.
The pressure can be recovered from the equation
$-\Delta p = \partial_i\partial_j u_iu_j$.
Moreover, one can do the construction in such a way that
 $(u,p)$ is a suitable weak solution in $R^3\times(0,\infty)$
 and $u(t)\to u_0$ in $L^2$ on every compact subset of $R^3$.
The weak solution $u$  with these properties will be called
{\it the Leray solution}.
The relation of the Leray solution and the mild solution
introduced in the previous section is clarified by the following
``weak-strong uniqueness" theorem. In the case $u_0\in L^2\cap \hds 1$
the theorem was proved by Leray. Leray's result was generalized
in various directions, see for example \cite{Serrin, vonWahl, Lemarie}.
We will use the following version which is a special case
of Theorem 33.2, p. 354 from Lemari\'e-Rieusset's book \cite{Lemarie}.

\begin{Theorem}
\label{uniqueness}
Let $u$ be a Leray solution of the initial value problem~\ref{nse}--~\ref{inc}
with $u_0\in\hhd$. Let $\tmax(u_0)$ be the maximal time of existence
of the mild solution with of~\ref{nse}--~\ref{inc} with the same initial value $u_0$.
Then the mild solution coincides with $u$ in $R^3\times[0,\tmax(u_0))$.
\end{Theorem}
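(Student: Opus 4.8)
\textit{The plan.} Write $\bar u$ for the mild solution and $u$ for the Leray solution, both starting from $u_0\in\hhd$, and set $w=u-\bar u$. Since neither $u$ nor $\bar u$ lies in $L^2$, the energy method cannot be applied to them directly; the remedy is the Calder\'on splitting used in the construction of Leray solutions. Choosing $u_0=a_0+v_0$ with $a_0$ small in $\hhd$ and $v_0\in L^2$, let $a$ be the (global, small) solution with datum $a_0$. Both $u$ and $\bar u$ can be written with this common background, $u=a+v$ and $\bar u=a+\bar v$, where $v,\bar v$ solve the perturbed equation \ref{vequation} and lie in the energy class. Consequently $w=v-\bar v$ belongs to $\ltx\infty 2\cap\lthx 2 1$ and satisfies $w(\cdot,0)=0$, so the difference is genuinely a finite-energy object even though the individual solutions are not. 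This reduction to zero $L^2$ data with a common background is what makes the whole argument well posed.

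\textit{The decisive regularity.} The key structural input is that the mild solution $\bar u$ lives in $\lthx 4 1$ on $[0,T')$ for every $T'<\tmaxu$ (by the continuation property \ref{limit}) and is smooth there. By the Sobolev embedding $\hds 1\hookrightarrow L^6$ in $R^3$ we get $\bar u\in\ltx 4 6$ on each such interval, the endpoint Serrin class, since $\tfrac24+\tfrac36=1$. This is precisely the extra integrability that the Leray solution itself is not known to possess, and it is the engine of the uniqueness estimate.

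\textit{The energy--Gronwall estimate.} Formally $w$ solves $w_t+u\cdot\nabla w+w\cdot\nabla\bar u+\nabla\pi-\Delta w=0$. Testing against $w$ and using $\div u=\div w=0$ annihilates the terms $\int u\cdot\nabla w\cdot w$ and leaves only the cross term, so that
\be
\tfrac12\,\irth|w(t)|^2+\izt\irth|\nabla w|^2=-\izt\irth (w\cdot\nabla\bar u)\cdot w.
\ee
Integrating by parts moves the derivative off $\bar u$, giving $\int(w\cdot\nabla w)\cdot\bar u$, which is bounded by $\|w\|_{L^3}\|\nabla w\|_{L^2}\|\bar u\|_{L^6}$ via H\"older with exponents $(3,2,6)$. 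Interpolating $\|w\|_{L^3}\le C\|w\|_{L^2}^{1/2}\|w\|_{L^6}^{1/2}\le C\|w\|_{L^2}^{1/2}\|\nabla w\|_{L^2}^{1/2}$ and applying Young's inequality absorbs a half of $\|\nabla w\|_{L^2}^2$ into the left side and leaves
\be
\irth|w(t)|^2\le C\izt\|\bar u(s)\|_{L^6}^4\,\irth|w(s)|^2\,ds.
\ee
Since $\bar u\in\ltx 4 6$ on $[0,T')$, the weight $\|\bar u(\cdot)\|_{L^6}^4$ is integrable, so Gronwall's inequality forces $w\equiv 0$ on $\rthtp$. Letting $T'\uparrow\tmaxu$ yields coincidence on $R^3\times[0,\tmaxu)$.

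\textit{The main obstacle.} The delicate point, and the reason this is not a one-line computation, is that the Leray solution only satisfies an energy \emph{inequality}, not an identity, so one cannot literally differentiate $\|w\|_{L^2}^2$. The rigorous route is to combine the energy inequality for $u$ with the energy identity for the smooth solution $\bar u$ and with the cross relation obtained by using $\bar u$ as a test function in the weak formulation for $u$ (and $u$ against the classical equation for $\bar u$). Legitimacy of $\bar u$ as a test function is clear on compact subintervals of $(0,\tmaxu)$ by its smoothness, but it requires care as $t\to 0^+$, where one uses that $v,\bar v\to v_0$ in $L^2$, and at spatial infinity, where the finite-energy splitting is exactly what is needed. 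This bookkeeping is the technical heart of the statement and is precisely what is packaged in Theorem~33.2 of \cite{Lemarie}, of which the present theorem is a special case.
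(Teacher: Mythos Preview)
The paper does not prove this theorem at all: it is stated as a direct citation, ``a special case of Theorem~33.2, p.~354 from Lemari\'e-Rieusset's book \cite{Lemarie}.'' So there is no in-paper argument to compare against.

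Your sketch is the standard weak--strong uniqueness argument and is correct in outline. The use of the Calder\'on splitting $u_0=a_0+v_0$ is well aligned with the paper's own construction of Leray solutions for $\hhd$ data, and it is exactly what makes the difference $w=u-\bar u=v-\bar v$ a genuine finite-energy object. Your identification of $\bar u\in\ltx 4 6$ as the Serrin-endpoint input, the formal Gronwall computation, and---most importantly---your last paragraph flagging that the Leray solution only satisfies an energy \emph{inequality}, so that the rigorous argument must combine the inequality for $u$, the identity for the smooth $\bar u$, and the cross pairing (with care at $t\to 0^+$ and at spatial infinity), are all on target. One small point you pass over quickly: that $\bar v=\bar u-a$ itself lies in the energy class $\ltx\infty 2\cap\lthx 2 1$ on $[0,T')$ is not automatic from $\bar u,a\in\lthx 4 1$; it requires either invoking $L^2$ mild-solution theory for the perturbed equation \ref{vequation} with datum $v_0\in L^2\cap\hhd$, or running the energy identity for $\bar v$ (legitimate since $\bar v$ is smooth for $t>0$) together with a short-time continuity argument. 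This is routine, and your closing remark that the full bookkeeping is precisely what is packaged in \cite{Lemarie} is accurate.
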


  The problem of uniqueness of $u$
after $\tmax(u_0)$  is open. At the time of this writing we
cannot rule out that $\tmax(u_0)$ is finite and that the Leray
solution is not unique after $\tmax(u_0)$.
We will denote the set of all Leray solutions
with initial data $u_0\in \hhd$ by ${\rm NS}(u_0)$.

Proposition~\ref{epsiloncriterion} can be used to show that the only
reason for $\tmax(u_0)<\infty$ can be a finite time singularity.
We will now sketch a proof of this statement.
Let us assume that $T=\tmax(u_0)$ is finite. Set $r=\sqrt{T/2}$.
We consider the decomposition
$u=a+v$ as above, where $a$ is a solution generated by $a_0$ with
small $||a_0||_{\hhd}$ (and hence $a$ satisfies global estimates)
and $v$ satisfying the energy estimates.
The key point is that these estimates do not deteriorate as we approach $T$.
Using these estimates, together with corresponding estimates for the pressure,
it is not hard to see that for sufficiently large $R>0$, the assumptions
of Proposition~\ref{epsiloncriterion} are satisfied for our solution
$(u,p)$ and $Q_{z_0,r}$ with $z_0=(x_0,T)$ and $|x_0|>R$.
If $u$ does not develop a singularity at time $T$ in the ball $B_R$,
it means that $u$ and $\nabla u$ will be bounded in $(t_1,T)$ for any $t_1>0$.
We can now write the Navier-Stokes equation for $u$ as
\be
\label{linnse}
u_t-\Delta u + \nabla p = -\div (u\otimes u)\,\,.
\ee
Using the standard estimates for the small solution $a$, the energy estimates
for $v$, together with the pointwise bound for $u$ and $\nabla u$, one can
easily show that the term $u\otimes u=(a+v)\otimes(a+v)$ is
bounded both in $\ltx 2 2(R^3\times(T/2,T))$ and $\lthx 2 1(R^3\times(T/2,T))$,
and therefore also in $\lthx 2 {1/2} (R^3\times(T/2,T))$.
Viewing~\ref{linnse} as a linear equation with the right-hand side $-\div(u\otimes u)$,
we see by the energy estimate that $u\in\lthx 4 1 (R^3\times (0,T))$,
which means that $T$ was not the maximal time of existence of the mild solution,
a contradiction.

The weak solution $v$ of equation~\ref{vequation} always belongs to the energy space
$\ltx \infty 2 \cap \lthx 2 1$. As noticed already by Leray in \cite{Leray34},
this implies that $v$ is smooth and small for large times. In our set-up
we can see this from the fact that $||v(t)||^2_{\hhd}\le ||v(t)||_{L^2}||\nabla v(t)||_{L^2}$
and the expression on the right-hand side of this inequality clearly has to
be small on a large set of times if $v$ is in the energy space. Following the reasoning of Leray,
(\cite{Leray34}, p.\ 246), we notice that at a time $t_0$ when $||v(t_0)||_{\hhd}$
is small we can apply the theory of mild solutions and the weak-strong uniqueness
results to see that after time $t_0$ the solution $v$ coincides
with a global mild solutions. Similar considerations have been used for example in
 \cite{GallagherPlanchon}.

We can summarize the above facts in the following statement:
\begin{Proposition}
\label{compactnessofsing}
Let $u$ be a Leray solution of the Cauchy problem~\ref{nse}--~\ref{inc} with $u_0\in\hhd$.
Then for some compact set $K\subset R^3\times(0,\infty)$ we have
$\nabla u\in \ltx 4 2 (R^3\times(0,\infty)\setminus K)$.
In particular, $u$ is regular at every point of \hbox{$R^3\times(0,\infty)\setminus K$.}
\end{Proposition}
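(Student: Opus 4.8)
The plan is to confine the possible singular set in both time and space and then verify the two assertions on the complement of a space--time box. Write $T=\tmax(u_0)$. If $T=\infty$ the mild solution is global and smooth, and since it lies in $\lthx 4 1(R^3\times(0,T'))$ for every $T'$ with norm controlled by $\|u_0\|_{\hhd}$, the global small-data bound gives $\nabla u\in\ltx 4 2(R^3\times(0,\infty))$ and one may take $K=\emptyset$. So assume $T<\infty$. I will take $K=\overline{B_{0,R}}\times[T/2,T_1]$ for suitable $R,T_1$, and check both claims on the three pieces into which the complement splits: the early slab $t<T/2$, the late slab $t>T_1$, and the spatial exterior $|x|>R$, $T/2\le t\le T_1$.

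On the early slab the Leray solution coincides with the mild solution by Theorem~\ref{uniqueness}; it is therefore smooth, and since $T/2<\tmax$ it lies in $\lthx 4 1(R^3\times(0,T/2))$ (cf.~\ref{limit}), so in particular $\nabla u\in\ltx 4 2$ there. For the late slab I use eventual regularity. Because $v$ belongs to the energy space, the inequality $\|v(t)\|_{\hhd}^2\le\|v(t)\|_{L^2}\|\nabla v(t)\|_{L^2}$ shows $\|v(\cdot)\|_{\hhd}\in L^4_t(0,\infty)$, so $\|v(t)\|_{\hhd}\to0$ along a sequence $t\to\infty$; I fix such a $t_0>T$ with $\|v(t_0)\|_{\hhd}$ below the small-data threshold. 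By the mild theory and weak--strong uniqueness, after $t_0$ the solution agrees with a global mild solution, which together with the globally small $a$ shows that $u$ is smooth and $\nabla u\in\ltx 4 2$ on $R^3\times(T_1,\infty)$ with $T_1=t_0$.

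The exterior piece is where the real work lies. For regularity I show that the smallness~\ref{smallness} of Proposition~\ref{epsiloncriterion} holds at every $z_0=(x_0,t_0)$ with $|x_0|>R$ and $t_0\in[T/2,T_1]$, once $R$ is large. Indeed $a\in\lthx 4 1\hookrightarrow L^4_tL^6_x$ and $v$ lies in the energy space, hence in $L^{10/3}_{t,x}$; thus the tails $\iint_{Q_{z_0,r}}|u|^3$ tend to $0$ as $|x_0|\to\infty$, uniformly for $t_0\in[T/2,T_1]$. The pressure is handled exactly as in the proof of Lemma~\ref{stability}: one splits $p=\tilde p+h$ using the Riesz transforms and the harmonic decay estimate~\ref{hdecay}, subtracting the (irrelevant) function of $t$. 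This yields regularity on the exterior and, through the derivative estimates~\ref{derivativeestimates}, uniform pointwise bounds on $\nabla u$ there.

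It remains to prove $\nabla u\in\ltx 4 2$ on the exterior. Splitting $\nabla u=\nabla a+\nabla v$, the term $\nabla a$ is globally in $\ltx 4 2(R^3\times(0,\infty))$ by the small-data bound and is harmless. The energy space, however, gives only $\nabla v\in\ltx 2 2$, one power of $L^2_t$ short of the claim, and upgrading this is the main obstacle. I would do so by interpolation: from $\|\nabla v(t)\|_{L^2(|x|>R)}^4\le\big(\sup_{s\in[T/2,T_1]}\|\nabla v(s)\|_{L^2(|x|>R)}^2\big)\,\|\nabla v(t)\|_{L^2(|x|>R)}^2$ and integration in $t$, it suffices to bound the time integral, which is finite by the global energy bound, together with the uniform-in-time exterior $\dot H^1$ norm. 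The latter is the crux: I obtain it by summing the quantitative interior estimates of Proposition~\ref{epsiloncriterion} over a locally finite cover of $\{|x|>R\}$ by parabolic cylinders, thereby converting the uniform bound $\sup_t\int_{|x|>R}|v|^2<\infty$ coming from $v\in\ltx\infty2$ into a uniform exterior bound on $\nabla v$. Granting this, $\nabla u\in\ltx 4 2$ on all three pieces and $u$ is regular off $K$, so the singular set is contained in the compact set $K$, which is the proposition.
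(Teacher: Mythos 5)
Your proposal follows the same architecture as the paper's own proof, which is essentially a pointer to the discussion preceding the Proposition: mild-solution theory before $\tmax(u_0)$ (via Theorem~\ref{uniqueness}), $\ve$-regularity in the spatial exterior through the decomposition $u=a+v$, and Leray-type eventual regularity after a time $t_0$ at which $\|v(t_0)\|_{\hhd}$ drops below the small-data threshold. Your treatment of the early slab, the late slab, and of \emph{regularity} in the exterior (tail smallness of $r^{-2}\dint_{Q_{z_0,r}}|u|^3$ from $a\in L_t^4L_x^6$ and $v\in L_{t,x}^{10/3}$, with the pressure handled by the decomposition from Lemma~\ref{stability}) is correct, and in places more detailed than the paper's sketch. (A minor slip: when $\tmax(u_0)=\infty$ with large data, the $\lthx 4 1$-norm on $(0,T')$ is \emph{not} controlled by $\|u_0\|_{\hhd}$ via small-data theory, and finiteness on every $(0,T')$ does not by itself give finiteness on $(0,\infty)$; you need the eventual-smallness argument in that case as well.)

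The genuine gap is in your last step, the claimed bound $\sup_{t\in[T/2,T_1]}\|\nabla v(t)\|_{L^2(|x|>R)}<\infty$. The quantitative estimates of Proposition~\ref{epsiloncriterion} are sup-norm bounds $|\nabla^k u|\le C_k r^{-1-k}$ with absolute constants, so each cylinder of a locally finite cover of $\{|x|>R\}$ contributes a fixed amount $\approx C_1^2\,r^{-1}$ to $\int_{|x|>R}|\nabla u(t)|^2\,dx$, and the sum over the infinitely many cylinders diverges. Nor can those estimates ``convert'' $\sup_t\int_{|x|>R}|v|^2<\infty$ into derivative control: interpolating $\|\nabla v\|_{L^2(B)}^2\lesssim \|v\|_{L^2(B')}\|\nabla^2 v\|_{L^2(B')}+r^{-2}\|v\|_{L^2(B')}^2$ and inserting the pointwise bound on $\nabla^2 v$ leaves the sum $\sum_j\|v(t)\|_{L^2(B_j')}$, an $\ell^1$ sum of a sequence that is only known to be $\ell^2$-summable, which again may diverge. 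For a covering argument to close, you need local estimates whose right-hand sides are \emph{quadratic} in $u$, so that summation over a bounded-overlap cover is dominated by the finite global energy: for instance a Caccioppoli-type inequality for $\nabla u$, or a local energy estimate for the vorticity (whose equation contains no pressure), with the exterior pointwise bounds on $u$ and $\nabla u$ entering only as coefficients, followed by Gronwall on $[T/2,T_1]$. Alternatively --- and this is the mechanism the paper's ``comments above'' actually invoke --- one can avoid any sup-in-time $\dot H^1$ bound altogether: the pointwise bounds in the regular region together with the energy bounds give $u\otimes u\in \lthx 2 {1/2}$, and the linear Stokes energy estimate~\ref{linenergy} with $s=1/2$ then yields $u\in \lthx \infty {1/2}\cap \lthx 2 {3/2}\subset \lthx 4 1$, i.e.\ $\nabla u\in\ltx 4 2$, directly; in the presence of the singular set $K$ this argument must of course be localized to the exterior, but it only ever requires square-integrable-in-time control of the forcing, not the sup-in-time gradient bound your route hinges on.
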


\begin{proof}
The proof of the first statement follows from the comments above. The second
statement follows from the first and the standard regularity criteria,
such as the Ladyzhenskaya-Prodi-Serrin criterion, or from Proposition~\ref{epsiloncriterion}.
\end{proof}

The energy estimate for $v$ which can be obtained from equation~\ref{vequation} depends
on the decomposition of the initial data $u_0=a_0+v_0$. For our purposes in this paper
we need an energy estimate which is ``more uniform" (although more local).
Fortuitously, an estimate found by Lemari\'e-Rieusset in his work on weak solutions
with locally finite energy provides exactly what we need.
We will use the following notation: for $x_0\in R^3$ and $r>0$ we will denote by
$\tilde Q_{x_0, r}$ the space-time cylinder $B_{x_0,r}\times(0,r^2)$.
We will also use the notation $||u||_{\EE(\tqr)}$ to denote the
energy norm defined by
\be
\label{energynorm}
||u||^2_{\EE(\tqr)}=||u||^2_{\ltx\infty 2(\tqr)}+||\nabla u||^2_{\ltx 2 2(\tqr)}.
\ee
\begin{Lemma}
\label{lemarieestimates}
Let $u_0\in\hhd$ and let $u$ be a Leray solution of the Cauchy problem~\ref{nse}--~\ref{inc}
with initial condition $u_0$.  Then for each $r > 0$ and $x_0\in R^3$
\be
\label{lemarieestimate1}
||u||^2_{\EE(\tqr)}\le C(||u_0||_{\hhd})\,\,r
\ee
and, for a suitable function $p_{x_0,r}(t)$ of $t$,
\be
\label{lemarieestimate2}
\dint_{\tqr}|p-p_{x_0,r}(t)|^{3/2}\,dx\,dt\le C(||u_0||_{\hhd})\,\,r^{2}
\ee
\end{Lemma}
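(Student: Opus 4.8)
The plan is to reduce both estimates to the Calderon decomposition $u_0 = a_0 + v_0$ introduced in Section~\ref{sec:leray}, but with the decomposition chosen adaptively at the scale $r$ so that the resulting bounds scale correctly in $r$. The key is to exploit the scaling invariance of the $\hhd$-norm. First I would introduce the rescaled solution $u^{(r)}(x,t) = r\, u(x_0 + rx, r^2 t)$, which is again a Leray solution with initial datum $u_0^{(r)}(x) = r\, u_0(x_0 + rx)$ satisfying $\|u_0^{(r)}\|_{\hhd} = \|u_0\|_{\hhd}$ by scale invariance. Under this rescaling the cylinder $\tqr = B_{x_0,r}\times(0,r^2)$ becomes the unit cylinder $\tilde Q_{0,1} = B_{0,1}\times(0,1)$, the energy norm transforms as $\|u\|^2_{\EE(\tqr)} = r\,\|u^{(r)}\|^2_{\EE(\tilde Q_{0,1})}$, and the pressure integral in~\ref{lemarieestimate2} picks up exactly the factor $r^2$. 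Thus it suffices to prove a uniform bound $\|u^{(r)}\|^2_{\EE(\tilde Q_{0,1})}\le C(\|u_0\|_{\hhd})$ on the unit cylinder for the rescaled solution, with a constant depending only on the (scale-invariant) $\hhd$-norm of the data.

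Next I would prove this unit-scale bound using the Calderon splitting applied to $u_0^{(r)}$. Write $u^{(r)} = a + v$, where $a$ is the global mild solution generated by a smooth part $a_0$ that is small in $\hhd$, and $v$ solves equation~\ref{vequation} with $v_0 \in L^2$. The small solution $a$ is globally controlled in $\lthx 4 1$ and hence, by the linear energy inequality~\ref{linenergy}, also locally in the energy space on $\tilde Q_{0,1}$, with bounds depending only on $\|a_0\|_{\hhd}\le\|u_0\|_{\hhd}$. For the $L^2$ part $v$, the energy identity for~\ref{vequation} together with the interpolation estimate $c\|a\|_{\lthx 4 1}\|v\|^{1/2}_{\ltx\infty 2}\|\nabla v\|^{3/4}_{\ltx 2 2}$ quoted in Section~\ref{sec:leray} gives a global energy bound on $v$ in terms of $\|v_0\|_{L^2}$ and the smallness of $\|a\|_{\lthx 4 1}$. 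The central difficulty is that $\|v_0\|_{L^2}$ is not a scale-invariant quantity, so one cannot simply carry it through the rescaling. The remedy is that for each fixed scale we only need the energy on the \emph{unit} cylinder, and there the $L^2$-localization of $v_0$ can be estimated by the local $L^2$ mass of $u_0^{(r)}$ over a fixed ball, which is in turn controlled by $\|u_0^{(r)}\|_{\hhd} = \|u_0\|_{\hhd}$ via a local form of the Sobolev embedding $\hhd \hookrightarrow L^3$ restricted to a bounded region. This is precisely the place where Lemari\'e-Rieusset's local energy inequality does the work for us, and the hard part will be verifying that the local energy flux across the lateral boundary of $\tilde Q_{0,1}$ is absorbed with a constant depending only on $\|u_0\|_{\hhd}$.

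Finally I would derive the pressure estimate~\ref{lemarieestimate2} from the energy bound on $u^{(r)}$. On the unit cylinder, split the pressure via~\ref{pressureeq}, writing $p = \tilde p + h$ with $\tilde p = R_iR_j(u^{(r)}_i u^{(r)}_j \chi)$ for a cutoff $\chi$ supported slightly larger than $B_{0,1}$, and $h$ harmonic in $x$. By Calderon-Zygmund estimates $\tilde p$ is bounded in $L^{3/2}_{t,x}$ by $\|u^{(r)}\otimes u^{(r)}\|_{L^{3/2}}$, which the energy bound controls through the embedding of $\ltx\infty 2\cap\lthx 2 1$ into $\ltx 3 3$; the harmonic part $h$ is handled, after subtracting a function $p_{x_0,r}(t)$ of $t$ alone, by the interior decay estimate~\ref{hdecay} exactly as in the proof of Lemma~\ref{stability}. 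Undoing the rescaling then restores the factor $r^2$ and yields~\ref{lemarieestimate2} with a constant depending only on $\|u_0\|_{\hhd}$. The subtraction of $p_{x_0,r}(t)$ is harmless because, as noted after~\ref{hdecay}, the pressure is only determined up to a function of $t$.
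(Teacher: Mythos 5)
Your scaling reduction is correct and is a clean way to see why the right-hand sides are $C\,r$ and $C\,r^{2}$: the estimates are scale-covariant, so it suffices to prove a uniform bound on the unit cylinder. The genuine gap is in your middle step, where you propose to get that uniform bound from the Calderon splitting $u_0^{(r)}=a_0+v_0$. This cannot work, and the paper says so explicitly in the paragraph immediately preceding the lemma: the energy estimate obtained from equation~\ref{vequation} ``depends on the decomposition of the initial data,'' and the lemma exists precisely because an estimate depending only on $||u_0||_{\hhd}$ is needed. Concretely, the frequency cutoff that makes $a_0$ small in $\hhd$ cannot be chosen uniformly over a ball in $\hhd$: for any fixed cutoff there are data of a given $\hhd$-norm whose low-frequency part carries essentially all of that norm, so the cutoff --- and hence $||v_0||_{L^2}$ and every constant that passes through the splitting --- depends on the profile of $u_0$, not merely on $||u_0||_{\hhd}$. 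Your proposed remedy (controlling the ``$L^2$-localization of $v_0$'' by the local $L^2$ mass of $u_0^{(r)}$) does not repair this, because what enters the energy estimate for $v$ is its global $L^2$ norm through a global identity; localizing that identity to $\tilde Q_{0,1}$ reintroduces flux and pressure boundary terms, and at that point the Calderon splitting is doing no work at all --- you are facing the local energy estimate for $u$ itself.

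That local estimate is exactly what the paper's proof consists of: it is a citation to Proposition 32.1 and Lemma 32.2 of Lemari\'e-Rieusset's book (local Leray theory for uniformly-locally-$L^2$ data, usable here because $\sup_{x_0}\int_{B_{x_0,1}}|u_0|^2\le C||u_0||^2_{\hhd}$ by the embedding $\hhd\hookrightarrow L^3$), together with an identification of the two mechanisms that make those estimates close: (i) the energy flux in~\ref{localenergy} is $\sim|u|^3$ while the energy itself controls $\sim|u|^{10/3}$, and $3<10/3$ is what allows the flux to be absorbed; (ii) the non-local part of the pressure is controlled by working with $\nabla p$, whose kernel decays like $|x|^{-4}$, which is also the origin of the function $p_{x_0,r}(t)$. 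Your proposal invokes neither mechanism, and deferring ``the hard part'' to ``Lemari\'e-Rieusset's local energy inequality'' leaves precisely the content of the lemma unproved. Note also that your pressure step is circular as written: to apply the decay estimate~\ref{hdecay} to the harmonic part $h$ you need an a priori $L^{3/2}$ bound on $h$ (hence on $p$) over a larger cylinder, which is part of what is being proved; the non-circular route estimates the far-field contribution through $\nabla p$, using the scale-uniform energy bounds~\ref{lemarieestimate1} at all dyadic scales, and only then subtracts the value $p_{x_0,r}(t)$ at the center.
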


\begin{proof}
The first estimate can be easily derived from Proposition 32.1, p.\ 342
and its proof in \cite{Lemarie}, and the second estimate follows from
Lemma 32.2, p.\ 343 in the same book. There are two crucial points
in the proof of these estimates. One is that the energy flux in the
localized energy estimate~\ref{localenergy} is bounded by $\sim |u|^3$,
if we count the pressure as $p\sim |u|^2$. The energy itself
controls $\sim |u|^{10/3}$, and it is important for the
proof that this is a higher power than the one in the energy flux.
This is no longer the case in higher dimensions and therefore a possible
generalization to higher dimensions would not be straightforward.
Similar issue arises in the proof of partial regularity.
The second point is that the non-local effects of the pressure
are under control, so that the heuristics $p\sim |u|^2$
is valid, at least as far as the estimates are concerned.
To see this, we notice that the kernel of the pressure equation
\be
\label{p}
-\Delta p = \partial_i\partial_j(F_{ij}), \qquad \mbox{with $F_{ij}=u_iu_j$}
\ee
is
\be
\label{kernel}
G_{ij}=\partial_i\partial_j G, \qquad \mbox{with $G(x)={1\over{4\pi|x|}}$}\,.
\ee
Therefore the kernel for expressing the gradient $\nabla p$ in terms
of $F_{ij}$ decays as $|x|^{-4}$ as $x\to\infty$, and  is
integrable near $\infty$. This fast decay makes it possible
to  estimate  the contributions to $\nabla p$
from far away.  This would not be the case for $p$, and hence
we have to work with $\nabla p$, which is the reason for the appearance
of the function $p_{x_0,r}$ in the estimate~\ref{lemarieestimate2}.
This part of the argument would work in the higher dimensions as well.
\end{proof}

We can now formulate the main new result of this section.
\begin{Theorem}
\label{main}
Let $u^k_0$ be a bounded sequence of initial conditions in $\hhd$ converging weakly
in $\hhd$ to $u_0$. Let $u_k\in {\rm NS}(u^k_0)$ be
Leray solutions of the Cauchy problem with initial conditions $u^k_0$.
Assume that $u^k$ converge weakly to $u$ in distributions.
Then $u\in{\rm NS}(u_0)$, i.\ e.\ $u$ is a Leray solution
of the Cauchy problem with initial condition $u_0$.
\end{Theorem}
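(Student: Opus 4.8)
The plan is to combine the uniform local estimates of Lemma~\ref{lemarieestimates} with the compactness Proposition~\ref{compactness}, and then to verify that the weak limit attains the datum $u_0$ in the strong $L^2$-sense on compact sets, which is the property distinguishing a Leray solution from an arbitrary suitable weak solution.

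First I would record uniform control of the sequence. Since $u^k_0$ converges weakly in $\hhd$ it is bounded, say $\|u^k_0\|_{\hhd}\le M$; as the constant in Lemma~\ref{lemarieestimates} is monotone in the norm of the datum, the estimates~\ref{lemarieestimate1}--~\ref{lemarieestimate2} hold for every $u^k$ with one and the same constant $C(M)$. Covering any compact $K\subset\rth\times(0,\infty)$ by finitely many cylinders $\tqr$ gives uniform bounds for $u^k$ in the energy space $\EE$ and, after subtracting the harmless functions $p^k_{x_0,r}(t)$ of $t$ alone, for the pressures $p^k$ in $\ltx{3/2}{3/2}$ on $K$. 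I would then apply Proposition~\ref{compactness}: along a subsequence $u^k\to\bar u$ strongly in $\ltx 3 3$ on compact subsets and $p^k\rightharpoonup\bar p$ in $\ltx{3/2}{3/2}$. Since strong $L^3$-convergence implies distributional convergence, $\bar u$ equals the given distributional limit $u$, and the ``moreover'' part of Proposition~\ref{compactness} shows at once that $(u,p)$ is a suitable weak solution on $\rth\times(0,\infty)$. This settles everything in the definition of a Leray solution except the behaviour at $t=0$.

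The main work, and the main obstacle, is to show $u(\cdot,t)\to u_0$ in $L^2(B_{x_0,R})$ for every ball as $t\to 0_+$. A useful preliminary is that the data converge strongly: by $\hhd\hookrightarrow L^3$ the sequence $u^k_0$ is bounded in $L^2(B_{x_0,R})$, and its Gagliardo seminorm is controlled by the $\hhd$-seminorm, so Rellich's theorem upgrades the weak convergence to $u^k_0\to u_0$ strongly in $L^2_{loc}$. I would then proceed in two stages. For weak attainment, testing the equation for $u^k$ against a divergence-free $\phi\in C_c^\infty$ kills the pressure, and the uniform energy bound gives $|\langle u^k(t)-u^k_0,\phi\rangle|\le C(\phi,M)\,t$; letting $k\to\infty$ (using $u^k\to u$ in $\ltx 3 3$ and $\langle u^k_0,\phi\rangle\to\langle u_0,\phi\rangle$) yields $u(t)\rightharpoonup u_0$ weakly in $L^2_{loc}$ as $t\to 0_+$. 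To upgrade to strong convergence I would invoke that each Leray solution satisfies the localized energy inequality anchored at the initial time, i.e.\ the form of~\ref{localenergy}--~\ref{globalenergy} carrying the initial term $\int|u^k_0|^2\psi$ for a nonnegative cutoff $\psi$. Passing to the limit in it, the initial term converges by the strong $L^2_{loc}$-convergence of the data, the cubic terms by the strong $\ltx 3 3$-convergence of $u^k$, and the pressure term because strong times weak converges, giving the same inequality for $u$ with initial value $u_0$. Letting $t\to 0_+$ then forces $\limsup_t\int|u(t)|^2\psi\le\int|u_0|^2\psi$, which combined with the weak convergence gives convergence of the $L^2$-norms and hence strong convergence.

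I expect this last stage to be the delicate point. The naive diagonal argument interchanging $k\to\infty$ and $t\to 0_+$ fails, because the convergence $u^k(t)\to u^k_0$ in $L^2_{loc}$ carries no modulus of continuity at $t=0$ uniform in $k$; it is precisely the energy inequality anchored at the initial time, together with the uniform bounds from Lemma~\ref{lemarieestimates}, that supplies the missing uniformity. Justifying this inequality up to $t=0$ for the Leray solutions $u^k$ and passing rigorously to the limit (including the consistent normalization of the pressures modulo functions of $t$) is therefore where the real content of the proof lies; once it is in hand, $(u,p)$ is a suitable weak solution attaining $u_0$ in $L^2_{loc}$, i.e.\ $u\in\mathrm{NS}(u_0)$.
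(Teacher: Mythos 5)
Your proposal is correct and follows essentially the same route as the paper: uniform bounds from Lemma~\ref{lemarieestimates} feed into Proposition~\ref{compactness} to produce a suitable weak solution in the limit, and the heart of the matter is the local energy inequality anchored at $t=0$ (with the initial term passing to the limit via strong $L^2_{loc}$ compactness of $u^k_0$), which together with weak attainment of the datum upgrades to the strong local $L^2$ convergence $u(t)\to u_0$ required of a Leray solution. You even isolate the same two key points the paper highlights, namely weak continuity at $t=0$ from the equation and the $\limsup$ bound on $\|u(t)\psi\|_{L^2}$ from the limiting energy inequality.
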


\begin{proof}
Using Lemma~\ref{lemarieestimates}, Proposition~\ref{compactness} and Theorem~\ref{uniqueness},
we see that it is enough to show that $u(t)\to u_0$ in $L^2$ on every compact subset
of $R^3$. This can be seen as follows. We take a non-negative smooth test function
$\phi(x,t)$ compactly supported in $R^3\times[0,\infty)$. Note that we are taking
the interval $[0,\infty)$, which is closed at zero, and $\phi(x,0)$ does
not have to vanish everywhere. We write a version of the local energy inequality
with such test functions in the following form.
\begin{eqnarray}
\label{localenergy2}
\lefteqn{\int_0^\infty\int_{R^3}\left[ -|u^k|^2\phi_t+ 2|\nabla u^k|^2\phi\right]\,\,dx\,dt \le} \\
 & & \int_{R^3} |u^k_0|^2\phi(x,0)\,\,dx + \int_0^\infty\int_{R^3}\left[ |u^k|^2 \Delta \phi + (|u^k|^2+2p^k)u^k\nabla\phi\right]\,dx\,dt \nonumber
\end{eqnarray}

Since for every compactly supported smooth test function $\psi$ the sequence
 $u^k_0\psi$ is compact in $L^2$, we see that in the limit $k\to\infty$
 the inequality~\ref{localenergy2} will be preserved.
 Hence
 \begin{eqnarray}
\label{localenergy3}
\lefteqn{\int_0^\infty\int_{R^3}\left[ -|u|^2\phi_t+ 2|\nabla u|^2\phi\right]\,\,dx\,dt \le} \\
 & & \int_{R^3} |u_0|^2\phi(x,0)\,\,dx + \int_0^\infty\int_{R^3}\left[ |u|^2 \Delta \phi + (|u|^2+2p)u\nabla\phi\right]\,dx\,dt \,\,,\nonumber
\end{eqnarray}
where $p$ is a suitable pressure corresponding to the solution $u$.
The last inequality implies the required local $L^2$-continuity property at time $t=0$
for the solution $u$. The key points, well-known in the theory of weak solutions
and going back to Leray's paper \cite{Leray34} are that

\noindent
(a) the equation implies that
$u(t)\to u_0$ weakly in $L^2$ on compact subsets as $t\to 0$ and

\noindent
(b) inequality~\ref{localenergy3} implies that for every compactly supported smooth
test function $\psi$ one has  $\limsup_{t\to 0_+}||u(t)\psi||\le||u_0\psi||$.

\end{proof}

\begin{Corollary}
Let $u_0^k, \,u_0,\, u^k$ be as in Theorem~\ref{main}. Let $(0,\tmaxu)$ be the maximal
interval of existence of the mild solution $u$ starting at $u_0$.
Then for any compact set $K\subset R^3\times(0,\tmaxu)$ and any $k\ge k_0=k_0(K)$
the solutions $u^k$ are regular at all points of $K$ and converge uniformly
to $u$ in $K$, together with all spatial derivatives.
\end{Corollary}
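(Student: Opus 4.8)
The plan is to recognize this corollary as an assembly of the results already established, with \emph{the essential work being the verification that the pairs $(u^k,p^k)$ satisfy the uniform bounds of Proposition~\ref{compactness} on compact subsets of $\OO=\rth\times(0,\infty)$}, so that Lemma~\ref{stability2} can be applied on $K$. Once those hypotheses are in place, the conclusion — regularity of $u^k$ on $K$ for large $k$ together with convergence of all spatial derivatives — is precisely the statement of Lemma~\ref{stability2}, and $k_0(K)$ is the threshold that lemma furnishes.

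First I would produce the uniform energy and pressure bounds. Since the $u_0^k$ are bounded in \hhd, the constants $C(\|u_0^k\|_{\hhd})$ in Lemma~\ref{lemarieestimates} are uniformly bounded, so \ref{lemarieestimate1} and \ref{lemarieestimate2} hold with a single constant for the whole sequence. A compact set $K\subset\rth\times(0,\tmaxu)$ is bounded in space and time and bounded away from $t=0$; hence we may choose $x_0$ and $r$ with $K\subset\tqr$. Estimate~\ref{lemarieestimate1} then bounds $u^k$ uniformly in the energy space $\ener$ on $\tqr$, since $\|\cdot\|_{\EE(\tqr)}$ is exactly that energy norm. The pressure estimate~\ref{lemarieestimate2} only controls $p^k-p^k_{x_0,r}(t)$, but as recalled in the proof of Lemma~\ref{stability} the pressure may be altered by an arbitrary function of $t$ without affecting the equations or the regularity of the solution; after subtracting $p^k_{x_0,r}(t)$ we obtain a normalized $p^k$ uniformly bounded in \ltx{3/2}{3/2} on $\tqr$. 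This places $(u^k,p^k)$ in the setting of Proposition~\ref{compactness}.

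It remains to identify the limit and transfer regularity. By Theorem~\ref{main} the distributional limit $u$ is a Leray solution with initial datum $u_0$, and by Theorem~\ref{uniqueness} it coincides on $[0,\tmaxu)$ with the mild solution, which is smooth there; thus every point of $K$ is a regular point of $u$. The \ltx 3 3-compactness of $\{u^k\}$ from Proposition~\ref{compactness}, combined with uniqueness of the distributional limit, forces the full sequence $u^k\to u$ in \ltx 3 3 on compact subsets (every subsequence has a further subsequence converging in \ltx 3 3, necessarily to $u$). All hypotheses of Lemma~\ref{stability2} then hold on $K$, and its conclusion is exactly the corollary. I expect the only genuinely delicate point to be the pressure normalization, i.e.\ reconciling the Lemari\'e bound on $p^k-p^k_{x_0,r}(t)$ with the requirement of Proposition~\ref{compactness} that $p^k$ itself be controlled in \ltx{3/2}{3/2}; once that is settled, the rest is direct bookkeeping on top of Theorems~\ref{main} and~\ref{uniqueness} and Lemma~\ref{stability2}.
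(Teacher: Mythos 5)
Your proposal is correct and follows exactly the route the paper intends: its proof of this corollary is the one-line instruction to combine Theorem~\ref{main}, Lemma~\ref{lemarieestimates}, Proposition~\ref{compactness} and Lemma~\ref{stability2}, which is precisely what you have spelled out. Your careful handling of the pressure normalization (subtracting $p^k_{x_0,r}(t)$, which is legitimate since the pressure is only defined up to a function of $t$) is the same point the paper itself makes in the proof of Lemma~\ref{stability}, so there is no gap.
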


\begin{proof}
Apply the theorem, together with Lemma~\ref{lemarieestimates}, Proposition~\ref{compactness} and Lemma~\ref{stability2}.
\end{proof}
\begin{Corollary}
\label{stabil}
Let $u_0^k, \, u_0,\, u^k$ be as in the theorem. Assume that $\tmax(u_0^k)=T<+\infty$
for each $k$ and that the sigular points $z_k$ of $u$ at $t=T$ (which exist by
Proposition~\ref{compactnessofsing}) stay in a compact subset of $R^3\times\{T\}$.
Then $\tmax(u_0)\le T$.
\end{Corollary}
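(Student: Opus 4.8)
The plan is to argue by contradiction, turning the claimed bound on $\tmaxu$ into a statement about singular points and then invoking the stability of singularities under weak limits from Lemma~\ref{stability}.

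First I would assume, toward a contradiction, that $\tmaxu > T$. By Theorem~\ref{main} the weak limit $u$ belongs to $\mathrm{NS}(u_0)$, and by the weak-strong uniqueness in Theorem~\ref{uniqueness} it coincides on $[0,\tmaxu)$ with the mild solution starting from $u_0$; the latter is smooth on $\rth\times(0,\tmaxu)$. Since $T<\tmaxu$ by assumption, $u$ is therefore regular at every point of the slice $\rth\times\{T\}$.

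Next I would record that the mild solution $u^k$ is smooth on $(0,T)$, again by Theorem~\ref{uniqueness}, but blows up at $T=\tmax(u^k_0)$; the hypotheses therefore furnish singular points $z_k=(x_k,T)$ of $u^k$ lying on the slice $\rth\times\{T\}$. By assumption the $z_k$ stay in a common compact subset, so after passing to a subsequence $x_k\to x_0$ and hence $z_k\to z_0:=(x_0,T)$, an interior point of $\rth\times(0,\infty)$.

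It then remains to place ourselves in the setting of Proposition~\ref{compactness}. Since $\|u^k_0\|_{\hhd}$ is bounded, Lemma~\ref{lemarieestimates} gives uniform bounds for $u^k$ in the energy space and for the pressures $p^k$ (after subtracting from each the function $p_{x_0,r}(t)$ of $t$ alone, which alters neither the equations nor the $\ve$-regularity criterion) on compact subsets of $\rth\times(0,\infty)$. Proposition~\ref{compactness} then yields, along a further subsequence, strong convergence $u^k\to\tilde u$ in $\ltx 3 3$ on compact subsets and a limiting suitable weak solution $(\tilde u,p)$; since the weak distributional limit of Theorem~\ref{main} is unique, $\tilde u=u$. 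Lemma~\ref{stability} now applies: the singular points $z_k$ of $(u^k,p^k)$ converge to $z_0$, so $z_0$ is a singular point of $(u,p)$. This contradicts the regularity of $u$ at $z_0$ from the first step, and the contradiction gives $\tmaxu\le T$.

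The step I expect to demand the most care is the passage to Proposition~\ref{compactness}: one must confirm that the strong $\ltx 3 3$ limit it produces really is the distributional weak limit $u$ of Theorem~\ref{main}, and that the additive $t$-dependent ambiguity in the pressure --- precisely what Lemma~\ref{lemarieestimates} and the harmonic decomposition inside the proof of Lemma~\ref{stability} are built to absorb --- introduces no loss. Once these points are in place, the argument is just a clean application of Lemma~\ref{stability}.
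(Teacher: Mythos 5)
Your proof is correct and is essentially the paper's own argument: the paper's proof is the one-line instruction to combine Theorem~\ref{main}, Lemma~\ref{lemarieestimates}, Proposition~\ref{compactness} and Lemma~\ref{stability}, which is exactly the chain you have spelled out (with Theorem~\ref{uniqueness} supplying the regularity of $u$ on $R^3\times\{T\}$ when $\tmax(u_0)>T$). The two points you flag at the end --- identifying the strong $L^3_{t,x}$ limit with the distributional limit $u$, and the harmlessness of the $t$-dependent pressure normalization from Lemma~\ref{lemarieestimates} --- are genuine details the paper leaves implicit, and you resolve them the way the paper's framework intends.
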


\begin{proof}
Apply the theorem, together with Lemma~\ref{lemarieestimates}, Proposition~\ref{compactness} and Lemma~\ref{stability}.
\end{proof}

Let us recall that
\be
\label{rmaxdef}
\rmax=\sup\{\rho;\,\, \tmax(u_0)=+\infty \quad\mbox {for every $u_0\in\hhd$ with $||u_0||_{\hhd}<\rho$}\}
\ee
Let us also define
\be
\label{Mdef}
\MM=\{u_0\in \hhd;\,\,\tmax(u_0)<\infty,\, ||u_0||_{\hhd}=\rmax\}
\ee
\begin{Corollary}
\label{Mcompactness}
The set $\MM$ is non-empty. Moreover, $\MM$ is compact modulo scalings and translations,
 i.\ e.\ if $u^k_0\in\MM$ is a sequence in $\MM$, then there exist
$\ll_k>0$ and $x^k_0\in R^3$ such the sequence $v^k\in\hhd$ defined by
$v^k(x)=\ll_ku^k_0(\ll_k x-x_0^k)$ is compact in $\hhd$.
\end{Corollary}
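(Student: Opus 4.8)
The plan is to dispatch both the non-emptiness and the compactness of $\MM$ by one and the same concentration argument, whose engine is the elementary Hilbert-space fact that in $\hhd$ weak convergence together with convergence of norms implies strong convergence. We work under the standing hypothesis $\rmax<\infty$.

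I would begin by recording two consequences of the definition~\ref{rmaxdef}: first, every $u_0$ with $\tmax(u_0)<\infty$ satisfies $\|u_0\|_{\hhd}\ge\rmax$ (otherwise $u_0$ lies in some $\BB_\rho$ with $\rho<\rmax$ and the mild solution is global); second, for each $\rho>\rmax$ there is a datum of norm $<\rho$ with finite $\tmax$. Hence I can select a sequence $\tilde u_0^k$ with $\tmax(\tilde u_0^k)<\infty$ and $\|\tilde u_0^k\|_{\hhd}\to\rmax$. The decisive step is to remove the two sources of non-compactness, scaling and translation, before passing to a weak limit. For each $k$, writing $T_k=\tmax(\tilde u_0^k)$ and choosing (via Proposition~\ref{compactnessofsing}) a spatial singular point $\tilde y_k$ of the Leray solution at time $T_k$, I rescale by $\ll_k=\sqrt{T_k}$ and translate so that this point moves to the origin, setting $v_0^k(x)=\ll_k\tilde u_0^k(\ll_k x+\tilde y_k)$. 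Since both operations preserve the $\hhd$-norm, $\|v_0^k\|_{\hhd}=\|\tilde u_0^k\|_{\hhd}\to\rmax$; since they act covariantly on $\tmax$, we get $\tmax(v_0^k)=1$; and the corresponding Leray solution $v^k$ has a singular point at the fixed space-time point $(0,1)$.

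Next, passing to subsequences, I extract weak limits $v_0^k\rightharpoonup v_0$ in $\hhd$ and $v^k\rightharpoonup v$ in the sense of distributions; the uniform local bounds of Lemma~\ref{lemarieestimates} (depending only on the bounded quantity $\|v_0^k\|_{\hhd}$) together with Proposition~\ref{compactness} guarantee such limits exist, and Theorem~\ref{main} then yields $v\in{\rm NS}(v_0)$. Because the singular points of $v^k$ all sit at $(0,1)$, in particular in a compact subset of $R^3\times\{1\}$, Corollary~\ref{stabil} applies with $T=1$ and gives $\tmax(v_0)\le 1<\infty$. Consequently $\|v_0\|_{\hhd}\ge\rmax$, while weak lower semicontinuity of the norm gives $\|v_0\|_{\hhd}\le\liminf_k\|v_0^k\|_{\hhd}=\rmax$. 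Therefore $\|v_0\|_{\hhd}=\rmax$ and $v_0\in\MM$, so $\MM\ne\emptyset$. Moreover $\|v_0^k\|_{\hhd}\to\rmax=\|v_0\|_{\hhd}$, so the weak convergence upgrades to strong convergence $v_0^k\to v_0$ in $\hhd$. The compactness statement then follows by running the identical normalization on an arbitrary sequence $u_0^k\in\MM$: since now $\|u_0^k\|_{\hhd}=\rmax$ exactly and $\tmax(u_0^k)<\infty$, the normalized data $v^k(x)=\ll_k u_0^k(\ll_k x - x_0^k)$ satisfy the same hypotheses, and the argument above produces a subsequence converging strongly in $\hhd$, which is precisely the asserted compactness modulo scalings and translations.

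The main obstacle is the passage to the limit in the presence of the scaling and translation symmetries: without the normalization the weak limit $v_0$ could be $0$ (mass escaping to spatial infinity or to a degenerating scale) and would then carry no singularity, so the conclusion $\tmax(v_0)<\infty$ would fail. The whole point is that fixing $\tmax=1$ by scaling and pinning a singular point at the origin by translation are exactly the two normalizations needed to make Corollary~\ref{stabil} applicable, since that corollary requires a common blow-up time and singular points confined to a compact set. The routine but necessary checks are that the symmetries act covariantly on $\tmax$ and on the singular set, and that the $\hhd$-bound is preserved so that Lemma~\ref{lemarieestimates}, Proposition~\ref{compactness} and Theorem~\ref{main} can be invoked uniformly in $k$.
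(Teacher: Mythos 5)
Your proof is correct and follows essentially the same route as the paper's own argument: normalize by scaling and translation so that each solution first blows up at time $1$ with a singular point at the origin, pass to a weak limit, apply Corollary~\ref{stabil} to get $\tmax(v_0)\le 1$, combine the definition of $\rmax$ with weak lower semicontinuity to conclude $\|v_0\|_{\hhd}=\rmax$, and upgrade weak to strong convergence via norm convergence. The only difference is that you spell out details the paper leaves implicit (the extraction of the distributional limit of the solutions, the two elementary consequences of the definition of $\rmax$, and the unified treatment of non-emptiness and compactness), which is fine.
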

\begin{proof}
Let $u^k_0\in\hhd$ be a sequence of initial data with $\tmax(u_0^k)$  finite and
$||u_0^k||_{\hhd}\to\rmax$.
Find $\ll_k>0$ and $x^k_0$ so that the functions given by
$v^k(x)=\ll_ku^k_0(\ll_k x-x_0^k)$ develop their first singularity at time $t=1$
and that $(x,t)=(0,1)$ is a singular point of $v^k$. We can assume that the functions
$v^k_0(x)=v^k(x,0)$ converge weakly in $\hhd$ to $v_0\in\hhd$.
By Corollary~\ref{stabil} we know that $\tmax(v_0)\le 1$, and
by definition of $\rmax$ this means that $||v_0||_{\hhd}=\rmax$
This shows that $\MM$ is non-empty.
We also see that $||v_0^k||\to ||v_0||$ and hence $v_0^k\to v_0$ strongly.
\end{proof}

The following corollary can be thought of as a variant of results
in \cite{Gallagher} and \cite{Tao}. See also Theorem 33.3, p.\ 359 in \cite{Lemarie}
for a related statement about ``individual solutions".
\begin{Corollary}
\label{cor:aprestimate}
Assume that every solution of the Cauchy problem~\ref{nse}--~\ref{inc} with
$u_0\in \hhd$ is regular, i.\ e. $\tmax(u_0)=+\infty$ for each
$u_0\in\hhd$. Then, for $l=0,1,2,\dots$ there exist functions $F_l\colon[0,\infty)\to[0,\infty)$
such that
\be
\label{aprestimate}
t^{(l+1)/2}\sup_x|\nabla^l u(x,t)|\le F_l(||u_0||_{\hhd})
\ee
\end{Corollary}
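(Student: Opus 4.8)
The plan is to argue by contradiction, exploiting the scaling invariance of the quantity on the left together with the compactness and convergence machinery already developed. Fix $l$ and a radius $\rho>0$ and set
\[
F_l(\rho)=\sup\Bigl\{\,t^{(l+1)/2}\sup_x|\nabla^l u(x,t)| \;:\; t>0,\ \|u_0\|_{\hhd}\le\rho,\ u\in{\rm NS}(u_0)\,\Bigr\}.
\]
By hypothesis $\tmax(u_0)=+\infty$ for every datum, so by weak--strong uniqueness (Theorem~\ref{uniqueness}) the Leray solution coincides with the smooth mild solution on all of $\rth\times(0,\infty)$; in particular the inner supremum is finite for each individual $u$. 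The goal is to show $F_l(\rho)<\infty$, which gives the asserted bound upon taking $\rho=\|u_0\|_{\hhd}$.

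Suppose instead $F_l(\rho)=\infty$. Then there are data $u_0^k$ with $\|u_0^k\|_{\hhd}\le\rho$, points $x_k\in\rth$ and times $t_k>0$ such that
\[
t_k^{(l+1)/2}\,|\nabla^l u^k(x_k,t_k)|\to\infty ,
\]
where $u^k$ is the (unique) solution with data $u_0^k$. The first step is to normalize by the natural scaling $u(x,t)\mapsto\lambda u(\lambda x,\lambda^2 t)$, which preserves the $\hhd$-norm of the initial data and leaves $t^{(l+1)/2}|\nabla^l u(x,t)|$ invariant (a short computation: $\nabla^l$ contributes $\lambda^{l+1}$ and $t^{(l+1)/2}$ absorbs it via $s=\lambda^2t$). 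Taking $\lambda_k=t_k^{-1/2}$ and translating by $x_k$, I obtain rescaled solutions $\tilde u^k$ with data $\tilde u_0^k$ satisfying $\|\tilde u_0^k\|_{\hhd}\le\rho$ and $|\nabla^l\tilde u^k(0,1)|\to\infty$.

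Next I extract limits. Since $\tilde u_0^k$ is bounded in $\hhd$, pass to a subsequence with $\tilde u_0^k\rightharpoonup u_0$ weakly in $\hhd$. By Lemma~\ref{lemarieestimates} the $\tilde u^k$ and their pressures enjoy uniform local energy and pressure bounds, so Proposition~\ref{compactness} yields a further subsequence converging strongly in $\ltx 3 3$ on compact sets (hence in distributions) to a limit $u$, and Theorem~\ref{main} gives $u\in{\rm NS}(u_0)$. The global regularity hypothesis forces $\tmax(u_0)=+\infty$, so $(0,1)$ lies in the interior of the existence interval of $u$. Applying the Corollary immediately following Theorem~\ref{main} on a compact neighborhood $K$ of $(0,1)$, the $\tilde u^k$ converge to $u$ on $K$ together with all spatial derivatives; in particular $\nabla^l\tilde u^k(0,1)\to\nabla^l u(0,1)$, a finite number, contradicting the divergence above. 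Hence $F_l(\rho)<\infty$ for all $\rho$ and $l$.

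The scale-invariance computation and the preservation of the class ${\rm NS}$ under scaling and translation are routine. The one point demanding care is the extraction step: I must verify that the uniform bounds of Lemma~\ref{lemarieestimates} survive the rescaling, which they do since those bounds depend only on $\|\tilde u_0^k\|_{\hhd}\le\rho$ through a monotone constant $C(\cdot)$, so Proposition~\ref{compactness} and Theorem~\ref{main} genuinely apply to the rescaled sequence and deliver convergence of the derivatives at the fixed point $(0,1)$. This interplay between the scale-invariance of the target quantity and the scale-robustness of the compactness bounds is the heart of the argument.
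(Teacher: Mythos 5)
Your proof is correct and follows essentially the same route as the paper's: argue by contradiction, use the scaling and translation invariance to normalize the blow-up point to $(x,t)=(0,1)$, extract a weak limit of the data, and then combine Theorem~\ref{main} with Lemma~\ref{stability2} (via Lemma~\ref{lemarieestimates} and Proposition~\ref{compactness}) to contradict the divergence of $|\nabla^l \tilde u^k(0,1)|$. The only differences are cosmetic: you treat general $l$ directly and spell out the rescaling computation, whereas the paper does $l=0$ and notes the derivative cases are identical.
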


\begin{proof}
Let us prove the case $l=0$, the proof for the derivatives being the same.
By scaling invariance, it is enough to prove the statement for $t=1$.
If the statement fails we can assume by the translational invariance that there
exists a sequence of initial data $u_0^k$ bounded in $\hhd$ such that
for the corresponding solutions $u^k$ one has  $|u^k(0,1)|\ge k$.
Let $u_0$ be a weak limit of $u_0^k$, By our assumption, the solution
$u$ corresponding to $u_0$ is regular at $(x,t)=(0,1)$ and
by Theorem~\ref{main} and Lemma~\ref{stability2} we get a contradiction.
\end{proof}


\begin{thebibliography}
\small

\bibitem{BahouriGerard}
Bahouri, H., Gerard, P., High frequency approximation of
  solutions
to critical nonlinear wave equations. Amer.\ J.\ Math.\ 121 (1999), no. 1,
131--175.



\bibitem{Bourgain}
Bourgain, J., Global wellposedness of defocusing
critical nonlinear Schrödinger equation in the radial case.
J.\ Amer.\ Math.\ Soc.\ 12 (1999), no.\ 1, 145--171

\bibitem{Brandolese}
Brandolese, L., Localisation, oscillation et comportement asymptotique pour
les equations de Navier-Stokes, These de doctorat, ENS Cachan (2001).


\bibitem {CKN}
Caffarelli, L., Kohn, R.-V., Nirenberg, L., Partial regularity of
suitable weak solutions of the Navier-Stokes equations, Comm.\ Pure
Appl.\ Math., Vol. XXXV (1982), pp. 771--831.

\bibitem{CCalderon}
Calderon, C. P., Existence of weak solutions for the Navier-Stokes equations with initial data in $L\sp p$. Trans.\ Amer.\ Math.\ Soc.\ 318 (1990), no. 1, 179--200.

\bibitem{CKSTT}
Colliander, J., Keel, M., Staffilani, G., Takaoka, H.,
Tao, T.,  Global well-posedness and scattering for the energy-critical
nonlinear  Schrödinger equation in $\Bbb R\sp 3$.
Ann.\ of Math.\ (2) 167 (2008), no. 3, 767--865.



\bibitem{Gallagher}
 Gallagher, I., Profile decomposition for solutions of the Navier-Stokes equations. Bull.\ Soc.\ Math.\ France 129 (2001), no. 2, 285--316.

 \bibitem{GallagherPlanchon}
 Gallagher, I., Iftimie, D., Planchon, F., Asymptotics and stability for global solutions to the Navier-Stokes equations. Ann.\ Inst.\ Fourier (Grenoble) 53 (2003), no. 5, 1387--1424.



 \bibitem{KatoH}
  Kato, T., Fujita, H., On the nonstationary Navier-Stokes system. Rend.\ Sem.\ Mat.\ Univ.\
  Padova 32 1962 243--260.

\bibitem{Kato}
 Kato, T. Strong $L\sp{p}$-solutions of the Navier-Stokes equation in
 $ \R\sp{m}$, with applications to weak solutions,  Math. Z.
 {187} (1984), no. 4, 471--480.

\bibitem{KenigKoch}Kenig, C., Koch, G.,
An alternative approach to regularity for the Navier-Stokes equations in a critical space,
preprint arXiv:0908.3349.

\bibitem{KenigMerle}
 Kenig, C., Merle, F., Global well-posedness, scattering and blow-up for the energy-critical,
 focusing, non-linear Schroedinger equation in the radial case.
 Invent.\ Math.\ 166 (2006), no. 3, 645--675.

 \bibitem{KiSer}
 Kikuchi, N., Seregin, G.,
Weak solutions to the Cauchy problem for the Navier-Stokes equations satisfying the local energy inequality.  Nonlinear equations and spectral theory, 141--164,
Amer.\ Math.\ Soc.\ Transl.\ Ser.\ 2, 220, Amer.\ Math.\ Soc., Providence, RI, 2007.

\bibitem{KochTataru}
 Koch, H., Tataru, D.  Well-posedness for the Navier-Stokes equations,
 Adv.\ Math. 
 157 (2001), no. 1, 22--35.

\bibitem{LadSer}
 Ladyzhenskaya, O. A., Seregin, G. A., On partial regularity of suitable weak solutions to the three-dimensional Navier-Stokes equations. J.\ Math.\ Fluid Mech.\ 1 (1999), no. 4, 356--387.

\bibitem{Lin}  Lin, F-H., A new proof of the Caffarelli-Kohn-Nirenberg theorem.
Comm. Pure Appl. Math. 51 (1998), no. 3, 241--257.

 \bibitem{Lemarie}
  Lemari\'e-Rieusset, P. G., Recent developments in the Navier-Stokes problem. Chapman \& Hall/CRC Research Notes in Mathematics, 431. Chapman \& Hall/CRC, Boca Raton, FL, 2002.

\bibitem {Leray34}
Leray, J., Sur le mouvement d'un liquide visqueux emplissant
l'espace, Acta Math. 63(1934), pp. 193--248.

\bibitem{NRS}
Ne\v cas, J.; R\accent23 u\v zi\v cka, M.; \v Sver\'ak, V.
On Leray's self-similar solutions of the Navier-Stokes equations. Acta Math. 176 (1996), no. 2, 283--294.


\bibitem{Scheffer} Scheffer, V., Partial regularity of solutions to the Navier-Stokes equations. Pacific J.\ Math.\ 66 (1976), no. 2, 535--552.




 \bibitem{Serrin}
  Serrin, J., The initial value problem for the Navier-Stokes equations. 1963 Nonlinear Problems (Proc.\ Sympos., Madison, Wis.\ pp.\ 69--98 Univ.\ of Wisconsin Press, Madison, Wis.

\bibitem{Tao}
Tao, T.,
A quantitative formulation of the global regularity problem for the periodic Navier-Stokes equation.
Dyn.\ Partial Differ.\ Equ.\ 4 (2007), no. 4, 293--302.

\bibitem{Taylor}
 Taylor, M. E., Analysis on Morrey spaces and applications to Navier-Stokes and other evolution equations. Comm.\ Partial Differential Equations 17 (1992), no. 9-10, 1407--1456.

\bibitem{vonWahl}
von Wahl, W., The equations of Navier-Stokes and abstract parabolic equations. Aspects of Mathematics, E8. Friedr. Vieweg \& Sohn, Braunschweig, 1985.




\end{thebibliography}
\end{document}